\theoremstyle{theorem}
\newtheorem{theorem}{Theorem}
\newtheorem{corollary}[theorem]{Corollary}
\newtheorem{lemma}[theorem]{Lemma}
\newtheorem{proposition}[theorem]{Proposition}
\theoremstyle{definition}
\newtheorem{definition}[theorem]{Definition}
\newtheorem{example}[theorem]{Example}
\newtheorem{remark}[theorem]{Remark}
\newtheorem{notation}[theorem]{Notation}
\newenvironment{thmbis}[1]
  {%
   \addtocounter{theorem}{-1}%
   \begin{theorem}}
  {\end{theorem}}
\thanks{The author is partially supported by the Spanish Ministerio de Econom{\'{i}}a y Competitividad (grant MTM 2012-30719).}
\address{Facultad de Ciencias Econ{\'{o}}micas y Empresariales. Universidad Aut{\'{o}}noma de Madrid, Campus Universitario de Cantoblanco. 28049 Madrid (Espa{\~{n}}a)}
\email{JaimeJ.Sanchez@uam.es}
\subjclass[2010]{54H20, 37B25, 37E99}
\keywords{Attractor, wild surface, discrete dynamical system}
\begin{document}

\author{J. J. S\'anchez-Gabites}
\title{On the set of wild points of attracting surfaces in $\mathbb{R}^3$}
\begin{abstract} Suppose that a closed surface $S \subseteq \mathbb{R}^3$ is an attractor, not necessarily global, for a discrete dynamical system. Assuming that its set of wild points $W$ is totally disconnected, we prove that (up to an ambient homeomorphism) it has to be contained in a straight line. Using this result and a modification of the classical construction of a wild sphere due to Antoine we show that there exist uncountably many different $2$--spheres in $\mathbb{R}^3$ none of which can be realized as an attractor for a homeomorphism.
\end{abstract}

\maketitle

\section*{Introduction}

It is well known that attractors of dynamical systems usually have a very intricate structure as subsets of the phase space, and this prompts naturally the somewhat vague problem of trying to understand how complicated they can be. More specifically, one can consider the following \emph{realizability problem}: given a compact set $K$, can it be realized as an attractor for a dynamical system on some $\mathbb{R}^n$? The answer depends on the dimension $n$ of the ambient space, on whether $K$ is given in the abstract or already as a subset of $\mathbb{R}^n$, on whether one considers global or local attractors, continuous or discrete dynamics, etc. There are numerous papers in the literature that deal with this question in some variant or another, among which we may cite \cite{crovisierrams1}, \cite{duvallhusch1}, \cite{garay1}, \cite{gunther1}, \cite{gunthersegal1}, \cite{jimenezllibre1}, \cite{peraltajimenez1}, \cite{kato1}, \cite{mio5} or \cite{sanjurjo1}. Here we shall concentrate on the realizability problem for closed surfaces $S \subseteq \mathbb{R}^3$ as attractors for homeomorphisms. This case appears naturally in dynamics; for instance, one may think of the invariant tori that may arise in an action-angle description of a Hamiltonian system.

In previous papers we were able to prove that the horned sphere of Alexander cannot be realized as an attractor \cite{mio6} and, jointly with R. Ortega \cite{ortegayo1}, that the same holds true for the sphere of Antoine. Building on this it was possible to construct surfaces of any genus that cannot be attractors either. All these examples are wild surfaces in the sense of geometric topology; roughly speaking, this means that they cannot be smoothed out within $\mathbb{R}^3$ (the formal definition will be given in the next section). This is no accident, since a surface that is not wild can always be realized as an attractor (Proposition \ref{prop:tame_att}), and it leads one to wonder what the precise relation between the wildness of a surface and the possibility of realizing it as an attractor could be. In this paper we obtain a result in this direction showing that, if the set $W$ of wild points of an attracting surface $S$ is totally disconnected, then it must be rectifiable; that is, it must be possible to perform an ambient homeomorphism of $\mathbb{R}^3$ that sends $W$ into a straight line (Theorem \ref{teo:main2}).

In the particular case of the sphere of Antoine mentioned earlier, the set of wild points is a rather peculiar Cantor set called ``Antoine's necklace'' which is not rectifiable, so this sphere cannot be an attractor. This result was originally proved in \cite{ortegayo1} by a careful analysis of the fundamental group of the complement of Antoine's necklace which depended crucially on the hierarchical nature of the construction of the necklace and therefore was very specific to that particular example. The approach of the present paper is more general, in that it only involves the rectifiability of the set of wild points rather than any of its finer details. This generality will allow us to prove, in particular, that there are uncountably many different ways to embed any given surface in $\mathbb{R}^3$ in such a way that it cannot be realized as an attractor.

\section{Statement of results} \label{sec:statement}

Let $f$ be a homeomorphism of $\mathbb{R}^3$. By an \emph{attractor} $K$ for $f$ we mean a compact invariant set (that is, $f(K) = K$) which attracts all compact subsets of some neighbourhood $U$ of $K$. This means that for every compact set $P \subseteq U$ and every neighbourhood $V$ of $K$ in $\mathbb{R}^3$ there exists $n_0 \in \mathbb{N}$ such that $f^n(P) \subseteq V$ for every $n \geq n_0$. In the language of dynamics, $K$ is stable in the sense of Lyapunov and attracts all points in $U$. Notice that no particular assumption is made about the size of $U$, so $K$ is a local but not necessarily global attractor.

By a \emph{closed surface} we mean a compact, boundariless $2$--manifold. A closed surface $S \subseteq \mathbb{R}^3$ is \emph{locally flat} at a point $p$ if $p$ has a neighbourhood $U$ in $\mathbb{R}^3$ such that the pair $(U,U\cap S)$ is homeomorphic to $(\mathbb{R}^3, \mathbb{R}^2 \times \{0\})$. This definition is standard; see for instance the book by Daverman and Venema \cite[p. xvi]{davermanvenema1}. Intuitively it means that, near $p$, the surface lies in $\mathbb{R}^3$ like a plane. For instance, every piecewise linear surface $S \subseteq \mathbb{R}^3$ is locally flat at every point, and the same holds true when $S$ is a differentiable submanifold of $\mathbb{R}^3$ because of the existence of adapted charts.

Our first result is very simple:

\begin{proposition} \label{prop:tame_att} Let $S \subseteq \mathbb{R}^3$ be a closed surface that is locally flat at every point. Then $S$ is an attractor for a homeomorphism (and actually, also for a flow) of $\mathbb{R}^3$.
\end{proposition}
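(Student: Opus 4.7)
The plan is to use the classical bicollaring theorem of Morton Brown: since $S$ is locally flat and compact, it admits a bicollar, that is, a homeomorphism $h : S \times [-1,1] \to N$ onto a neighbourhood $N$ of $S$ in $\mathbb{R}^3$ with $S \subseteq \mathrm{int}(N)$ and $h(x,0) = x$ for every $x \in S$. Once this is available, the idea is to transport a contracting flow on $[-1,1]$ to $N$ through $h$ and then extend it by the identity outside $N$ to all of $\mathbb{R}^3$.

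Concretely, let $\psi_s$ be the flow on $[-1,1]$ generated by $\dot t = -t(1-t^2)$, so that $\psi_s(\pm 1) = \pm 1$, $\psi_s(0) = 0$, and $\psi_s(t) \to 0$ as $s \to +\infty$ for every $t \in (-1,1)$. Set $\Psi_s(x,t) := (x, \psi_s(t))$ on $S \times [-1,1]$ and define $\Phi_s := h \circ \Psi_s \circ h^{-1}$ on $N$, extended by the identity on the closed set $\mathbb{R}^3 \setminus \mathrm{int}(N)$. Since $\psi_s$ fixes $\{-1,1\}$, both definitions coincide on $\partial N = h(S \times \{-1,1\})$, so $\Phi_s$ is continuous on $\mathbb{R}^3$ by the pasting lemma; applying the construction to $\psi_{-s}$ produces the inverse, so each $\Phi_s$ is a homeomorphism and $\{\Phi_s\}_{s\in\mathbb{R}}$ is a continuous flow. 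The surface $S = h(S \times \{0\})$ is invariant because $\psi_s(0) = 0$; it is Lyapunov stable because $\psi_s$ preserves every interval $(-\varepsilon,\varepsilon) \subseteq (-1,1)$, so $h(S \times (-\varepsilon,\varepsilon))$ is an arbitrarily small, invariant open neighbourhood of $S$; and every point of $\mathrm{int}(N)$ is attracted to $S$ because $\psi_s(t) \to 0$ for each $t \in (-1,1)$, and $h$ is a homeomorphism.

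Consequently $\{\Phi_s\}$ is a flow of $\mathbb{R}^3$ for which $S$ is an attractor with basin containing $\mathrm{int}(N)$, and the time-one map $f := \Phi_1$ is the required homeomorphism. The only nontrivial ingredient is the existence of the bicollar $h$; this is precisely what local flatness of $S$ in $\mathbb{R}^3$ guarantees via Brown's theorem, so what would otherwise be the main obstacle is already fully settled in the topological literature. Everything else is a routine assembly of the pieces above.
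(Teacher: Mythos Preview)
Your proof is correct but follows a genuinely different route from the paper's. The paper argues: locally flat $\Rightarrow$ locally tame (Remark~\ref{rem:equiv}) $\Rightarrow$ tame, by the deep Bing--Moise theorem, so after an ambient homeomorphism $S$ becomes a polyhedron; it then invokes a cited lemma (G\"unther--Segal) stating that every polyhedron in $\mathbb{R}^3$ is an attractor for a flow, and conjugates back. Your argument bypasses both the Bing--Moise theorem and the black-box lemma: you use only Brown's bicollaring theorem (locally flat $\Rightarrow$ bicollared, for a two-sided closed hypersurface) and then write the attracting flow down explicitly by transporting the one-dimensional flow of $\dot t=-t(1-t^2)$ through the bicollar and extending by the identity. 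This is more elementary and entirely self-contained; the paper's route, by contrast, ties the proposition into the tame/wild framework that organises the rest of the article. One small point of language: the neighbourhoods $h(S\times(-\varepsilon,\varepsilon))$ are only \emph{positively} invariant under $\Phi_s$, not invariant, but that is exactly what Lyapunov stability requires, and together with pointwise attraction on $\mathrm{int}(N)$ it yields attraction of compact subsets as in the paper's definition.
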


The only purpose of the above proposition is to show that, since we are interested in finding surfaces that cannot be attractors, we need to turn our attention to surfaces that contain points at which the surface is not locally flat. These points, and the surface itself, we call \emph{wild}, although this is a slight abuse of terminology\footnote{Formally, a wild point is one where the surface is not locally \emph{tame} rather than not locally flat. However, in the case that concerns us now, both notions are equivalent. More on this in Section \ref{sec:prop1}.}. The set of wild points is clearly closed so, when nonempty, it is compact. It might be difficult at first to imagine how a surface could fail to be locally flat, but we shall see an example after stating our main theorem below.

We need a final definition: let us say that a totally disconnected compact set $T \subseteq \mathbb{R}^3$ is \emph{rectifiable} if there exists a homeomorphism of $\mathbb{R}^3$ that sends $T$ into a straight line. Again, it might be difficult to imagine how a totally disconnected compact set $T \subseteq \mathbb{R}^3$ may not be rectifiable. Rather than presenting an example now, we postpone this to Section \ref{sec:cantor} where we recall the definition of ``Antoine's necklace'', which is a Cantor set that is not rectifiable. As the reader will see, the construction is very flexible and can be modified to produce a great variety of Cantor sets that are not rectifiable.

The main result of the paper is the following:

\begin{theorem} \label{teo:main2} Let $S \subseteq \mathbb{R}^3$ be a closed, connected surface that bounds a $3$--manifold. Suppose that $S$ contains a compact, totally disconnected set $T$ such that:
\begin{itemize}
	\item[(i)] $S$ is locally flat at each $p \not\in T$.
	\item[(ii)] $T$ is not rectifiable.
\end{itemize}
Then $S$ cannot be realized as an attractor for a homeomorphism of $\mathbb{R}^3$.
\end{theorem}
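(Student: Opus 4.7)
The plan is to argue by contrapositive. Supposing $S$ is an attractor for some homeomorphism $f\colon \mathbb{R}^3 \to \mathbb{R}^3$, I would show that $T$ must be rectifiable, contradicting hypothesis~(ii). The mechanism is the classical characterization of tame totally disconnected compacta in $\mathbb{R}^3$: such a set is rectifiable if and only if, for every $\varepsilon > 0$, it can be covered by finitely many pairwise disjoint $3$-cells each of diameter less than $\varepsilon$. The proof reduces to producing these cell covers from the attractor hypothesis and the local-flatness data in~(i).

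The first step extracts controlled trapping neighborhoods of $S$. From attractor theory (isolating blocks in the topological category, combined with regular-neighborhood and polyhedral approximation arguments applied to shape-theoretic neighborhood bases) I would obtain a fundamental system of compact positively invariant neighborhoods $\{N_k\}$ of $S$, each a compact $3$-submanifold of $\mathbb{R}^3$ with bicollared boundary, satisfying $f(N_k) \subseteq \mathrm{int}(N_k)$ and $\bigcap_{n \geq 0} f^n(N_k) = S$. Since $S$ is connected, bounds a $3$-manifold $M \subseteq \mathbb{R}^3$, and satisfies $f(S)=S$, an easy compactness argument (using that $M$ is the bounded side) forces $f(M) = M$, so the intersections $N_k \cap M$ provide arbitrarily small inner collar-like neighborhoods of $S$.

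The second step manufactures the cell covers. Fix $\varepsilon > 0$. Since $T$ is compact and totally disconnected, I can partition it into clopen pieces $T_1, \dots, T_r$ of diameter less than $\varepsilon/3$ and choose pairwise disjoint open subsets $U_i \subseteq S$ with $\overline{U_i} \cap T = T_i$, such that the frontier of each $U_i$ in $S$ is a finite disjoint union of simple closed curves contained in $S \setminus T$. Hypothesis~(i) makes these curves locally flat, hence tame in $\mathbb{R}^3$ and equipped with product bicollars on $S$. Using such a bicollar I cap $U_i$ with small disks pushed slightly off $S$ into a sufficiently deep trapping neighborhood $N_k \cap M$, obtaining pairwise disjoint tame $2$-spheres $\Sigma_i$ of diameter less than $\varepsilon$, each bounding a compact region $B_i \supseteq T_i$. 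Since $\Sigma_i$ is tame, the Sch\"onflies theorem guarantees that $B_i$ is a $3$-cell, so the $B_i$ form the required cell cover and $T$ is rectifiable, contradicting~(ii).

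The principal obstacle is the first step: promoting the abstract attractor hypothesis to a fundamental system of trapping neighborhoods that are genuine $3$-manifolds with bicollared boundary in a purely topological (non-smooth) setting, and arranging them to be compatible with the local-flatness structure of $S$ near the frontier curves used in the second step. Once these neighborhoods are in hand, the second step is essentially an exercise in tame embedding theory combined with Sch\"onflies, and the rectifiability of $T$ follows cleanly from the cell-cover criterion.
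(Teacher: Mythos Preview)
Your second step contains a fatal error. The sphere $\Sigma_i$ you build is the union of $\overline{U_i}$ (a disk in $S$ containing $T_i$) and a cap pushed into $M$. Near any point of $T_i$ the sphere $\Sigma_i$ coincides with $S$, so wherever $S$ fails to be locally flat, so does $\Sigma_i$. Since $T$ is not rectifiable, $S$ \emph{must} have genuinely wild points inside $T$ (a totally disconnected compact set lying on a tame surface in $\mathbb{R}^3$ is automatically rectifiable), and these wild points land on some of your $\Sigma_i$. Hence $\Sigma_i$ is wild, Sch\"onflies does not apply, and there is no reason for the region it bounds to be a $3$--cell. Even setting this aside, $T_i$ sits on $\partial B_i = \Sigma_i$, not in the interior of $B_i$, so the $B_i$ are not neighbourhoods of $T$ in the sense required by the cell--cover criterion; enlarging $B_i$ across $S$ to fix this runs straight into the wildness on the non-$M$ side that you are trying to rule out.

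There is a structural symptom that confirms something is wrong: your attractor hypothesis is never used in an essential way. The trapping neighbourhoods $N_k$ serve only as a place to put the caps, but a collar of $S$ in $M$ (which exists by Brown's theorem, no dynamics needed) already provides that. So if your argument worked it would show that \emph{every} totally disconnected compact $T$ sitting in such a surface is rectifiable, attractor or not, and Antoine's sphere is an explicit counterexample. The paper's route is entirely different: it introduces a numerical invariant $r(K)$ (the minimal rank of $H_1$ over a cofinal system of polyhedral manifold neighbourhoods), proves that $r(T)=\infty$ whenever $T$ is non-rectifiable, establishes the purely topological inequality $r(B)\geq r(T)$ for the $3$--manifold $B$ bounded by $S$ via a delicate subadditivity argument, shows that $B$ inherits the attractor property from $S$, and only then invokes the dynamical fact that attractors have finite $r$. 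You have misidentified the principal obstacle: it is not manufacturing trapping neighbourhoods, but transferring information from neighbourhoods of $B$ to neighbourhoods of $T$, and that transfer is exactly what the inequality $r(B)\geq r(T)$ accomplishes.
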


Theorem \ref{teo:main2} is stated in the form most convenient for the construction of nonattracting surfaces that follows below. However, it can also be presented (as we did in the abstract and the Introduction) as an assertion about the set of wild points of an attracting surface, as follows:

\begin{thmbis}{teo:main2} \label{teo:main2prima} Let $S \subseteq \mathbb{R}^3$ be a closed surface that bounds a $3$--manifold and is an attractor for a homeomorphism. Suppose that its set of wild points $W$ is totally disconnected. Then $W$ must be rectifiable.
\end{thmbis}

The condition that $S$ bounds a $3$--manifold means the following. Recall that any closed, connected surface $S$ separates $\mathbb{R}^3$ into two connected components $U_i$ whose closures are $\bar{U}_i = U_i \cup S$. This follows from Alexander duality and the fact that $S$ is a surface (see for instance the argument in \cite[Theorem 36.3, p. 205]{munkres3}). We say that $S$ \emph{bounds a $3$--manifold} if at least one of the $\bar{U}_i$ is a $3$--manifold (possibly noncompact) with boundary. The definition for a non connected $S$ is similar. Although this condition is fulfilled in many of the classical examples of wild surfaces, it is added for technical reasons and it would be nice if it could be removed from the theorem.
\medskip

\underline{\it Constructing nonattracting surfaces}. We now explain how to construct examples of surfaces that cannot be attractors. The procedure we describe is essentially the same as the one used by Antoine to construct a wild sphere. Let $T \subseteq \mathbb{R}^3$ be a totally disconnected, compact set of $\mathbb{R}^3$. Eventually we shall take $T$ to be non rectifiable, but this is not necessary for the moment. As with any compact subset of $\mathbb{R}^3$, we can find a sequence of compact $3$--manifolds with boundary $(N_k)_{k=1}^{\infty}$ such that $N_{k+1}$ is contained in the interior of $N_k$ for each $k$ and $T$ is the intersection of the $N_k$. Since $T$ is totally disconnected, in addition we may require that (i) the diameters of the connected components of the $N_k$ converge to zero as $k \rightarrow \infty$ and (ii) every connected component of each $N_k$ meets $T$, since those that do not can be removed. Figure \ref{fig:tot_disc} shows a totally disconnected set $T$ (suggested by the cloud of dots, presumably infinite) and the two first neighbourhoods of a possible sequence $(N_k)$. The first one is a single cell. The second one $N_2$ already consists of three connected components $N_{2,1}$, $N_{2,2}$, $N_{2,3}$ (a cell, a solid torus, and a solid double torus).

\begin{figure}[h]
\null\hfill
\subfigure[]{
\begin{pspicture}(0,0)(4.5,4.5)
\rput[bl](0,0){\scalebox{0.75}{\includegraphics{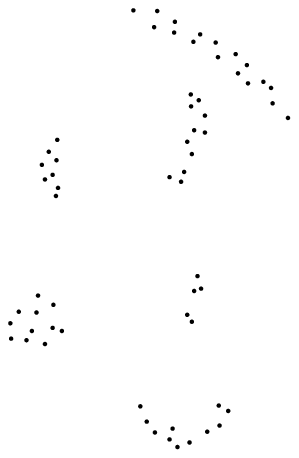}}}
\rput[bl](3.4,2.5){$T$}
\end{pspicture}}
\hfill
\subfigure[]{
\begin{pspicture}(0,0)(4.5,4.5)
\rput[bl](0,0){\scalebox{0.75}{\includegraphics{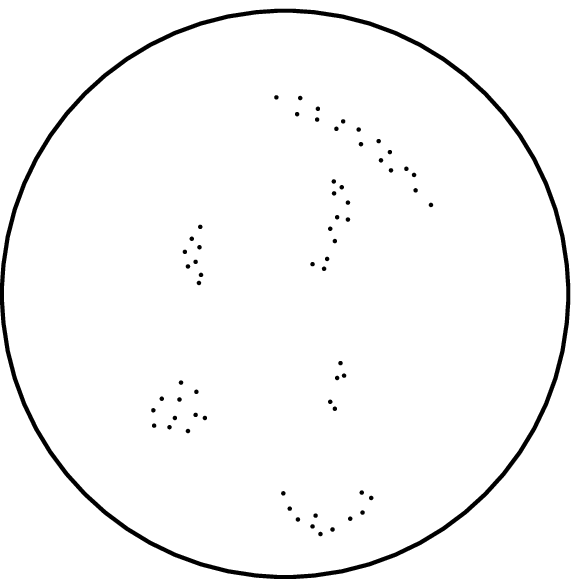}}}
\rput[bl](4,0.5){$N_1$}
\end{pspicture}}
\hfill
\subfigure[]{
\begin{pspicture}(0,0)(4.5,4.5)
\rput[bl](0,0){\scalebox{0.75}{\includegraphics{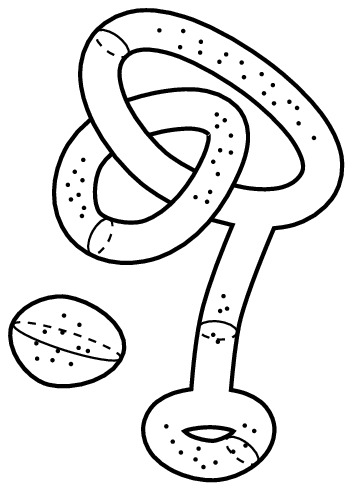}}}
\rput[bl](3,1.5){$N_2$}
\end{pspicture}}
\hfill\null
\caption{A totally disconnected set $T$ and a couple of its neighbourhoods $N_k$ \label{fig:tot_disc}}
\end{figure}

Let $S_0 \subseteq \mathbb{R}^3$ be a closed, locally flat surface such that $T$ lies in its exterior. For simplicity we shall take $S_0$ to be the boundary of a closed, round ball $B_0$ disjoint from $T$. We are going to modify $B_0$, without changing its topological type, so that the resulting $3$--manifold $B$ contains $T$ in its boundary. In order to do this we take the $N_k$ as a guide. First, we extract a solid ``feeler'' from $B_0$ that connects it to the boundary of $N_1$, meeting it in a disk $D_1$. See Figure \ref{fig:modify_B}.(a), where $B_0$ and the feeler are shown in a thicker outline and $D_1$ is shaded gray. Then, within $N_1$, we branch the feeler into ``subfeelers'', each one connected to a different component $N_{2,j}$ of $N_2$. Again, these subfeelers should meet the boundary of their corresponding $N_{2,j}$ in a disk $D_{2,j}$. See Figure \ref{fig:modify_B}.(b). Although Figure \ref{fig:modify_B} does not show this for simplicity, the feelers may be knotted or entangled.

\begin{figure}[h]
\null\hfill
\subfigure[]{
\begin{pspicture}(0,0)(8,4.5)
\rput[bl](0,0){\scalebox{0.75}{\includegraphics{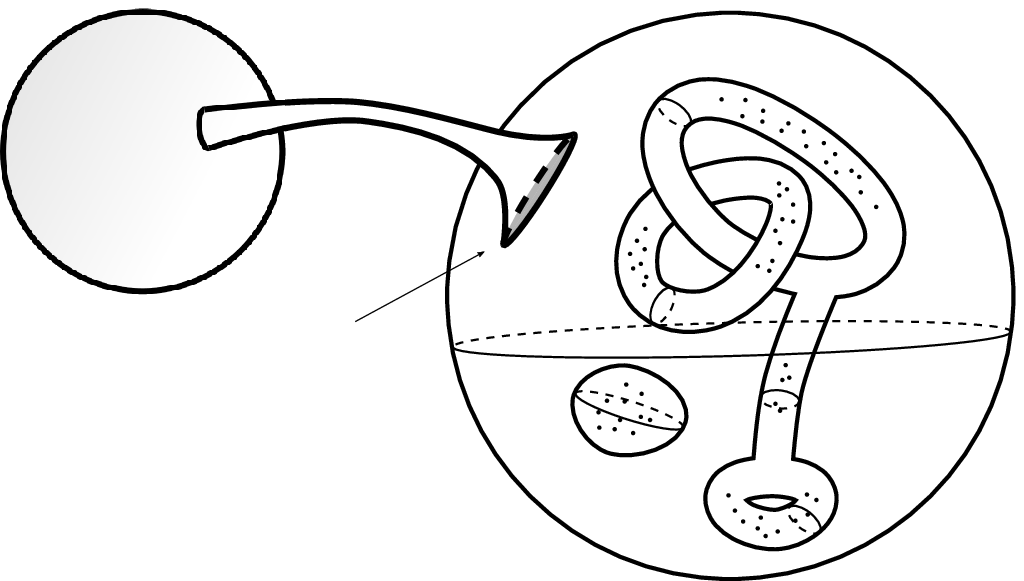}}}
\rput[bl](1.9,2.2){$B_0$} \rput[bl](7.4,0.4){$N_1$}
\rput[tr](2.8,2){$D_1$}
\end{pspicture}}
\hfill
\subfigure[]{
\begin{pspicture}(0,0)(8,4.5)
\rput[bl](0,0){\scalebox{0.75}{\includegraphics{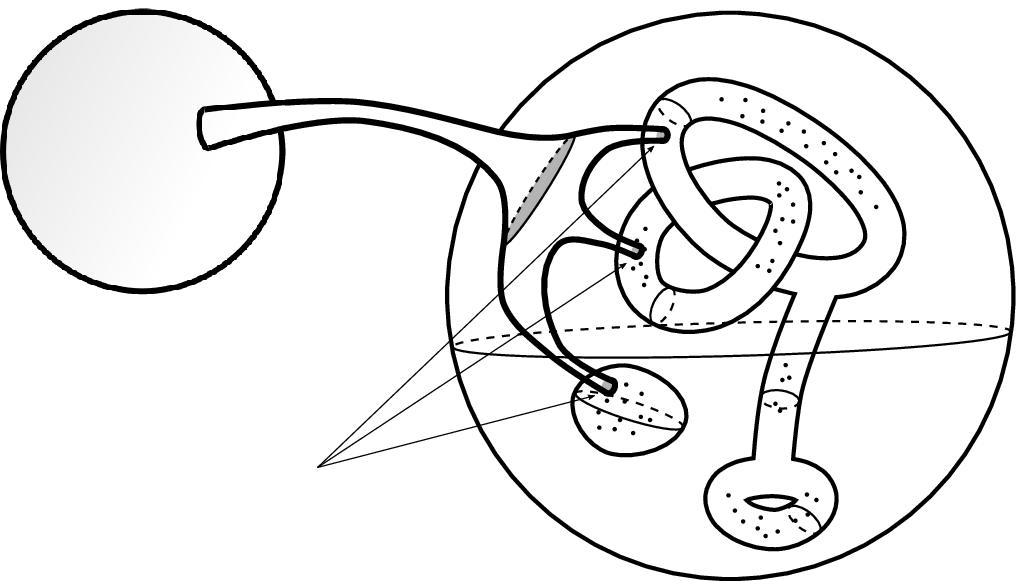}}}
\rput[bl](1.9,2.2){$B_0$}
\rput[bl](4.4,0.5){$N_{2,1}$} \rput[bl](6.2,1){$N_{2,3}$}
\rput[tr](2.8,0.8){$D_{2,j}$}
\end{pspicture}}
\hfill\null
\caption{Modifying $B_0$ so that its boundary contains $T$ \label{fig:modify_B}}
\end{figure}

We work in the same fashion within each component $N_{2,j}$ of $N_2$, splitting the subfeelers into even thinner subfeelers connected to those components of $N_3$ contained in $N_{2,j}$; then at an even smaller scale within the components $N_{3,j}$ of $N_3$, and so on. This gives rise to a nested sequence of compact $3$--manifolds, each larger but homeomorphic to the previous one. Let $B$ be the union of this sequence and the set $T$, which is the limit set of the feelers. It is not difficult to check that $B$ is a compact $3$--manifold homeomorphic to $B_0$ (see for instance the arguments in \cite[Chapter 2, pp. 46 ff.]{davermanvenema1}). It may be convenient to mention that $B$ may well not be \emph{ambient} homeomorphic to $B_0$; that is, there may not exist a homeomorphism of all of $\mathbb{R}^3$ that sends $B$ onto $B_0$.

Let the surface $S$ be the boundary of $B$. Then $S$ is homeomorphic to the original $S_0$ and bounds a $3$--manifold (the manifold $B$ itself). Also, by construction it contains $T$ and, if the feelers are drawn in a locally flat fashion as in Figure \ref{fig:modify_B}, clearly $S$ is locally flat at each point except for, possibly, at those belonging to $T$. Therefore, choosing $T$ to be non rectifiable (for instance, letting $T$ be Antoine's necklace, which is described in Section \ref{sec:cantor}), Theorem \ref{teo:main2} guarantees that $S$ cannot be realized as an attractor.

We will show in Section \ref{sec:proof_main} how to refine the above construction to prove that every closed (orientable) surface can be embedded in $\mathbb{R}^3$ in uncountably many different ways $\{S_i : i \in I\}$ none of which can be realized as an attractor. By ``different'' we mean that for any two different $i \neq j$ there does not exist a homeomorphism of $\mathbb{R}^3$ that sends $S_i$ onto $S_j$. This is true even if one considers only the simplest possible surface, a $2$--sphere:

\begin{corollary} \label{cor:uncountable} There exist uncountably many different $2$--spheres in $\mathbb{R}^3$ that cannot be realized as attractors.
\end{corollary}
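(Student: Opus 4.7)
The plan is to iterate the construction sketched just above the corollary, allowing the input totally disconnected set $T$ to vary over an uncountable family. Concretely, I would exhibit an uncountable family $\{T_\alpha : \alpha \in I\}$ of non-rectifiable Cantor sets in $\mathbb{R}^3$ that are pairwise inequivalent under ambient homeomorphism, and for each $\alpha$ run the feeler construction to produce a $2$-sphere $S_\alpha \subseteq \mathbb{R}^3$ whose set of wild points is exactly $T_\alpha$. Theorem \ref{teo:main2} then shows that none of the $S_\alpha$ can be realized as an attractor. If a homeomorphism $h$ of $\mathbb{R}^3$ satisfied $h(S_\alpha) = S_\beta$, it would map locally flat points to locally flat points, hence send $T_\alpha$ onto $T_\beta$; since the Cantor sets in the family are pairwise inequivalent, this forces $\alpha = \beta$.

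To produce the family $\{T_\alpha\}$, I would vary Antoine's construction. For any sequence $\mathbf n = (n_k)_{k \geq 1}$ of integers with $n_k \geq 4$, at the $k$-th stage place inside each solid torus of the previous stage a chain of exactly $n_k$ linked solid subtori in Antoine's fashion. The intersection of the resulting nested sequence is a Cantor set $T_{\mathbf n}$. The classical linking argument (each meridional disk of a stage-$k$ torus must meet any attempted rectification of the chain) goes through verbatim and shows that every $T_{\mathbf n}$ is non-rectifiable, so each $S_{\mathbf n}$ indeed fails to be an attractor. One also has to check that the feeler construction actually makes every point of $T_{\mathbf n}$ wild in $S_{\mathbf n}$; this is forced by the fact that near any $p \in T_{\mathbf n}$ the sphere $S_{\mathbf n}$ is approached by arbitrarily thin subfeelers arriving through a tangled system of linked tori, which prevents the pair $(U, U \cap S_{\mathbf n})$ from being standard.

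To separate different sequences $\mathbf n$, I would appeal to the fact that the numerical data $(n_k)$ is an ambient topological invariant of $T_{\mathbf n}$: this can be read from the minimal defining sequence of tori (a rigidity theorem of Sher's type for Antoine-style Cantor sets) or, more directly, from the splittings of $\pi_1(\mathbb{R}^3 \setminus T_{\mathbf n})$ induced by the nested tori at each scale, from which the integers $n_k$ can be recovered up to an obvious reindexing. Hence among the uncountably many sequences $\mathbf n$ one extracts an uncountable subfamily yielding pairwise inequivalent Cantor sets $T_{\mathbf n}$, and therefore pairwise inequivalent non-attracting spheres $S_{\mathbf n}$.

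The main obstacle I expect is this last step. While the feeler construction and the non-rectifiability of each $T_{\mathbf n}$ follow routinely from Antoine's classical techniques, proving rigorously that the sequence $\mathbf n$ is encoded in the ambient topology of $T_{\mathbf n}$ requires a nontrivial rigidity statement for wild Cantor sets rather than elementary manipulation; in practice one would either quote such a result or carry out a careful analysis of the nested tori at each scale and of the associated decomposition of $\pi_1$ of the complement.
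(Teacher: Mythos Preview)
Your overall plan coincides with the paper's proof. The paper simply quotes Sher for the existence of an uncountable family of pairwise inequivalently embedded Cantor sets in $\mathbb{R}^3$ (noting that all but at most one are non-rectifiable), builds a sphere $S_i$ over each $T_i$ via the feeler construction so that the wild set of $S_i$ is exactly $T_i$, applies Theorem~\ref{teo:main2}, and then observes---just as you do---that an ambient homeomorphism $S_i \to S_j$ must carry wild set to wild set, hence $T_i$ to $T_j$. Your proposed variation of the chain lengths $(n_k)$ and recovery of this data from the ambient topology is essentially the content of Sher's paper, so you have correctly located the nontrivial step and correctly noted it can be quoted.

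The one place your argument is loose is the identification $W = T$. The paper emphasizes that this does \emph{not} come for free from the basic feeler construction, and your stated reason---that the feelers ``arrive through a tangled system of linked tori''---is not the operative mechanism: the feelers themselves can be drawn tamely irrespective of how the guiding tori $N_k$ are linked. For Antoine-type $T$ the correct argument (which the paper sketches as one option) is that local flatness of $S$ at $p \in T$ would place $T$ locally in a plane, making $T$ locally rectifiable at $p$; but an Antoine necklace, and each of your variants $T_{\mathbf n}$, is nowhere locally rectifiable. The paper's actual proof takes a second route: it patterns each feeler on a Fox--Artin wild arc, so that every $p \in T$ is the endpoint of an arc in $S$ that is already wild at $p$, forcing $p \in W$ independently of any property of $T$. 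Either argument closes the gap, but you should replace the ``tangled tori'' heuristic by one of them.
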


We finish this section with some remarks about Theorem \ref{teo:main2}:

\begin{remark} \label{rem:aboutmain} (1) The construction just described illustrates why Theorem \ref{teo:main2} is more convenient than Theorem \ref{teo:main2prima} to establish the nonattracting nature of a given surface. To apply Theorem \ref{teo:main2} we only needed to check that the surface in question was locally flat outside $T$, and that was straightforward. Had we tried to apply Theorem \ref{teo:main2prima}, we would have needed to identify the set $W$ of wild points precisely, which is not easy to do (in fact, $W$ depends on the particular details of how the construction is performed).

(2) The conditions laid out in Theorem \ref{teo:main2} are sufficient, but not necessary, to guarantee that a surface (even a surface with a totally disconnected set of wild points) cannot be an attractor. A suitable example is the horned sphere $S$ of Alexander, which cannot be realized as an attractor (this is proved in \cite{mio6}) but whose set of wild points is a rectifiable Cantor set because it is contained in a straight line by construction. The Alexander sphere is shown in Figure \ref{fig:not_lf} and described carefully in \cite[Chapter 2]{davermanvenema1}. One may think of $S$ as the round sphere $\mathbb{S}^2$ from which infinitely many feelers have been extracted which, as their diameters tend to zero, converge to the Cantor set of wild points at the bottom of the figure. In contrast to the construction ``\`a la Antoine'' described earlier, where the feelers could or could not be entangled, here they must be entangled, for this is what ultimately lends the sphere its wild nature. 

\begin{figure}[h]
\begin{pspicture}(0,0)(7.5,4.2)
\rput[bl](1.25,0){\scalebox{0.75}{\includegraphics{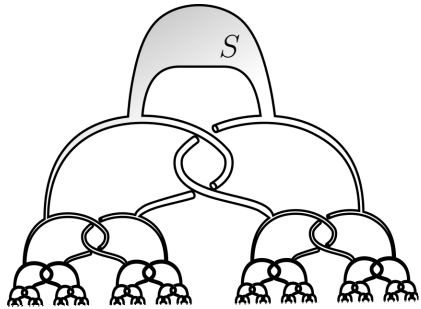}}}
\rput(4.2,3.5){$S$}
\end{pspicture}
\caption{The sphere of Alexander \label{fig:not_lf}}
\end{figure}

This example begs the question of whether it is possible to generalize Theorem \ref{teo:main2} so that it covers both the examples of Antoine and Alexander. Heuristically, such a generalization would somehow capture simultaneously the ``contributions to the wildness'' of $S$ due to both the non rectifiability of $W$ itself and the entanglement of the surface as it approaches $W$.
\end{remark}

The rest of the paper is organized as follows. Proposition \ref{prop:tame_att} is proved in the very brief Section \ref{sec:prop1}, where we also discuss succintly the relation between wildness and tameness. In Section \ref{sec:recall} we recall from \cite{mio6} the definition and properties of certain number $r(K)$ that can be associated to any compact set $K \subseteq \mathbb{R}^3$ and somehow measures its ``crookedness'' as a subset of Euclidean space. This $r(K)$ is finite in the case of attractors, and it is because of this property that it allows us to uncouple the topological and dynamical arguments underlying the proof of Theorem \ref{teo:main2}: dynamics will enter the picture only through the condition that $r$ of an attractor is finite whereas the hard work is on the topological side, in Sections \ref{sec:cantor} and \ref{sec:top}. The first one is devoted to the study of $r(T)$ for totally disconnected compact sets $T$. The main result there is Theorem \ref{teo:tot_disc}, which shows that $r(T)$ is either zero or infinity and relates this to the rectifiability of $T$. In Section \ref{sec:top} we essentially prove the inequality $r(B) \geq r(W)$ (Theorem \ref{teo:top}), where $B \subseteq \mathbb{R}^3$ is a compact $3$--manifold and $W$ is the set of wild points of its boundary $\partial B$. This inequality is at the heart of the proof of the main theorem, which is finally established in Section \ref{sec:proof_main} together with Corollary \ref{cor:uncountable}.

\section{Proof of Proposition \ref{prop:tame_att}} \label{sec:prop1}

In Section \ref{sec:statement} we defined a wild surface $S$ as one having at least a wild point; that is, a point at which the surface is not locally flat. Formally, wildness is not related to local flatness but to local tameness, so we shall devote a few lines to clarify the relation between these concepts.

Let us begin by defining a \emph{polyhedron} $P$ as the geometric realization of a finite simplicial complex in $\mathbb{R}^3$ (although very restrictive, this definition is enough for our purposes). A compact set $K \subseteq \mathbb{R}^3$ is \emph{tame} if there exists an ambient homeomorphism $h : \mathbb{R}^3 \longrightarrow \mathbb{R}^3$ such that $h(K)$ is a polyhedron. And it is \emph{locally tame} at a point $p \in K$ if there exist a (closed) neighbourhood $V$ of $p$ in $\mathbb{R}^3$ and an embedding $e : V \longrightarrow \mathbb{R}^3$ such that $e(V \cap K)$ is a polyhedron. These two definitions are classic; see for instance \cite[p. 145]{moise2}. Evidently a tame set is locally tame at each point. A deep theorem of Bing, proved independently by Moise, states that the converse is also true: a locally tame subset of $\mathbb{R}^3$ is tame \cite[Theorem 4, p. 254]{moise2}.

The above definitions are set up for any compact subset of $\mathbb{R}^3$, but we are interested in the case of closed surfaces $S$. The following holds true for them:

\begin{remark} \label{rem:equiv} For a closed surface $S \subseteq \mathbb{R}^3$, being locally flat at a point $p$ is equivalent to being locally tame at that same point $p$.
\end{remark}
\begin{proof} In this paper we shall only make use of implication ($\Rightarrow$), whose simple proof we give in detail. The converse, although conceptually easy, is rather technical and we only give an intuitive idea.

($\Rightarrow$) By local flatness there exist a neighbourhood $U$ of $p$ in $\mathbb{R}^3$ and a homeomorphism $h$ between $(U,U \cap S)$ and $(\mathbb{R}^3,\mathbb{R}^2 \times \{0\})$. Without loss of generality we can assume that $h(p)$ is the origin. Let $W := [-1,1] \times [-1,1] \times [-1,1]$. Clearly $V := h^{-1}(W)$ is a neighbourhood of $p$ in $\mathbb{R}^3$, and the restriction $e := h|_V$ is an embedding such that $e(V \cap S) = W \cap (\mathbb{R}^2 \times \{0\})$, which is a square (hence a polyhedron). Therefore $S$ is locally tame at $p$.

($\Leftarrow$) Since $S$ is locally tame at $p$, we may think of it (via the embedding $e$ of the definition of local tameness) as a polyhedral surface around $p$. Thus $p$ has a neighbourhood in $S$ that is a polyhedral disk. Such a disk can be flattened out. Therefore $S$ is locally flat around $p$.
\end{proof}

Thus for a surface $S$ one may equivalently define a point $p \in S$ to be wild whenever $S$ is not locally flat or not locally tame at $p$. The former characterization is self contained (it makes no reference to polyhedra), which is why we used it in the previous section. It is also the most convenient definition for later sections. In this section, however, the characterization in terms of polyhedra is more useful.

In order to prove Proposition \ref{prop:tame_att} we need an auxiliary result whose proof we omit:

\begin{lemma} \label{lem:aux} Let $P \subseteq \mathbb{R}^3$ be a polyhedron. Then there exists a flow in $\mathbb{R}^3$ having $P$ as an attractor.
\end{lemma}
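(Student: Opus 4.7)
The plan is to build a continuous flow $\Phi \colon \mathbb{R} \times \mathbb{R}^3 \to \mathbb{R}^3$ that is the identity outside a regular neighborhood $N$ of $P$, fixes $P$ pointwise, and radially collapses $N \setminus P$ onto $P$ along a collar coordinate. With such a flow $P$ is automatically invariant, and $\mathrm{int}(N)$ will be an attraction basin.

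First I would invoke regular neighborhood theory from PL topology: triangulate $\mathbb{R}^3$ so that $P$ is a subcomplex and let $N$ be a second derived neighborhood of $P$. Then $N$ is a compact PL $3$--manifold with $P \subseteq \mathrm{int}(N)$, and there is a PL map $\pi \colon \partial N \to P$ together with an identification
\[
N \;\cong\; \bigl(\partial N \times [0,1] \,\sqcup\, P\bigr)\big/(y,1)\sim\pi(y),
\]
under which $\partial N$ corresponds to $\partial N \times \{0\}$ and $N \setminus P$ is identified with the half-open collar $\partial N \times [0,1)$; the level $\partial N \times \{t\}$ collapses onto $P$ as $t \to 1$.

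Next I would use the explicit one-parameter group on $[0,1]$
\[
\tau_s(t) \;=\; \frac{t}{t + (1-t)\,e^{-s}}, \qquad s \in \mathbb{R},\ t \in [0,1],
\]
which fixes both endpoints, satisfies $\tau_{s_1}\!\circ\!\tau_{s_2}=\tau_{s_1+s_2}$, and monotonically sends each $t \in (0,1)$ to $1$ as $s \to +\infty$. I would define $\Phi_s(x) = x$ for $x \notin N$, $\Phi_s(p) = p$ for $p \in P$, and $\Phi_s(y,t) = (y,\tau_s(t))$ on the collar. The flow law is inherited from $\tau$. Continuity of $\Phi$ on the collar is immediate; across $\partial N$ it holds because $\tau_s(0)=0$ glues $\Phi_s$ to the identity on $\mathbb{R}^3 \setminus N$; at a point $p \in P$ it holds because any sequence $(y_n,t_n) \to p$ in the mapping-cylinder topology forces $t_n \to 1$, while $\tau_s(1)=1$ and $\tau_s$ is continuous. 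Lyapunov stability follows because $\tau_s(t) \ge t$ for $s \ge 0$, so the sets $\{t > 1-\varepsilon\} \cup P$ form a forward-invariant neighborhood basis of $P$; attraction of $\mathrm{int}(N)$ follows because every compact $K \subseteq \mathrm{int}(N)$ has its collar coordinate bounded below by some $t_0 > 0$, and by monotonicity of $\tau_s$ in $t$ together with $\tau_s(t_0) \to 1$, the image $\Phi_s(K)$ eventually enters every prescribed neighborhood of $P$.

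The main obstacle is the first step: knowing that $P$ admits a regular neighborhood with a mapping-cylinder structure. This is a classical but nontrivial theorem in PL topology (see for instance Rourke and Sanderson's \emph{Introduction to Piecewise-Linear Topology}); once it is available, everything else is elementary manipulation of the explicit formula for $\tau_s$. If one prefers to avoid this theorem, an alternative is to assemble the flow one elementary simplicial collapse at a time along a PL collapse of $N$ onto $P$, concatenating PL isotopies, at the cost of much heavier bookkeeping.
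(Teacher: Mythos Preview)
Your construction is sound. The paper itself does not prove this lemma at all: it simply cites G\"unther--Segal and an earlier paper of the author and moves on, so there is no in-paper argument to compare against. What you have supplied is a genuine direct proof, and the only substantial external input is exactly the one you flag: that a regular neighborhood $N$ of a compact polyhedron $P$ in a PL manifold carries a mapping-cylinder structure $N \cong M_\pi$ over $P$. This is indeed a classical (nontrivial) result in PL topology, and once it is granted your explicit one-parameter group $\tau_s$ on $[0,1]$ does all the remaining work cleanly. The verifications you sketch go through; the one point perhaps worth spelling out is that on a mapping cylinder with \emph{compact} base $\partial N$, a sequence $(y_n,t_n)$ in the collar converges to $p \in P$ if and only if $t_n \to 1$ and $\pi(y_n) \to p$. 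This biconditional (which uses compactness of $\partial N$) is what underlies both the continuity of $\Phi$ at points of $P$ and the fact that the level sets $t^{-1}((1-\varepsilon,1])$ form a forward-invariant neighborhood basis of $P$.
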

\begin{proof} See \cite[Corollary 4, p. 327]{gunthersegal1} or \cite[Proposition 12, p. 6169]{mio5}.
\end{proof}

\begin{proof}[Proof of Proposition \ref{prop:tame_att}] Suppose that $S$ has no wild points, so that by definition it is locally flat at each $p \in S$. By Remark \ref{rem:equiv} this implies that $S$ is locally tame at each $p \in S$ and by the theorem of Bing and Moise mentioned earlier, $S$ is tame. Thus, there exists a homeomorphism $h : \mathbb{R}^3 \longrightarrow \mathbb{R}^3$ such that $h(S)$ is a polyhedron. In turn, by Lemma \ref{lem:aux} there exists a flow having $h(S)$ as an attractor. Let $f$ be the time-one map of this flow, so that $h(S)$ is an attractor for the homeomorphism $f$ of $\mathbb{R}^3$. Then $S$ is an attractor for $h^{-1}fh$.
\end{proof}

\section{The number $r(K)$} \label{sec:recall}

In \cite{mio6} we described how to associate, to each compact subset $K \subseteq \mathbb{R}^3$, a number $r(K)$ that provides a measure of how wildly $K$ sits in $\mathbb{R}^3$. We used it to show that certain arcs, balls and spheres (for instance, the horned sphere of Alexander in Figure \ref{fig:not_lf}.(b)) cannot be attractors. Since this number $r(K)$ will also be the fundamental tool in the proof of Theorem \ref{teo:main2}, we devote this section to review its definition and some of its properties. We refer the reader to \cite{mio6} for more details and proofs.

Let $K \subseteq \mathbb{R}^3$ be an arbitrary compact subset of $\mathbb{R}^3$. It is easy to see that any neighbourhood of $K$ contains a smaller one $N$ which is a compact, polyhedral $3$--manifold. For instance, cover $K$ with the interiors of closed cubes contained in $U$; discard all of them but finitely many using the compactness of $K$, thus obtaining a polyhedral $P$ neighbourhood of $K$, and finally let $N$ be a regular neighbourhood of $P$ (in the sense of piecewise linear topology \cite[Chapter 3, pp. 31 ff. and particularly Proposition 3.10, p. 34]{rourkesanderson1}). We call such an $N$ a \emph{pm}--neighbourhood of $K$ (\emph{pm} standing for polyhedral manifold). For $r$ a nonnegative integer, consider the following property that $K$ may or may not have: \[(P_r) : \text{ $K$ has arbitrarily small \emph{pm}--neighbourhoods $N$ with } {\rm rk}\ H_1(N) \leq r.\] Here $H_1(N)$ denotes the first homology group of $N$ with $\mathbb{Z}_2$ coefficients and ${\rm rk}$ denotes its rank as an Abelian group or, equivalently, its dimension as a vector space over $\mathbb{Z}_2$.

\begin{definition} $r(K)$ is the smallest nonnegative integer $r$ for which $(P_r)$ holds, or $\infty$ if $(P_r)$ does not hold for any $r$.
\end{definition}

Although the definition of $r(K)$ does not make any reference to dynamics, it has the following property which justifies our interest in it:

\begin{itemize}
	\item [(P1)] \emph{Finiteness}. If $K$ is a local attractor for a homeomorphism then $r(K) < \infty$. \label{P1}
\end{itemize}
\begin{proof} See \cite[Theorem 13]{mio6}.
\end{proof}

Essentially, the proof of Theorem \ref{teo:main2} will consist in showing that $r(S) = \infty$ and applying the finiteness property. However, computing $r(K)$ from first principles is in general very difficult and we will need to do it indirectly, making use of several additional properties of $r(K)$ that are, this time, geometric in nature: invariance, semicontinuity, nullity and subadditivity. We state them now.

\begin{itemize}
	\item [(P2)] \emph{Invariance}. If $K$ and $K'$ are ambient homeomorphic, then $r(K) = r(K')$.
\end{itemize}
\begin{proof} See \cite[Theorem 14]{mio6}.
\end{proof}
\begin{itemize}
	\item [(P3)] \emph{Semicontinuity}. Suppose $(K_n)_{n \geq 1}$ is a decreasing sequence (that is, $K_{n+1} \subseteq K_n$ for every $n$) of compact sets and denote $K$ its intersection. Then \[r(K) \leq \liminf_{n \rightarrow \infty}\ r(K_n).\]
\end{itemize}
\begin{proof} Denote $r := \liminf_{n \rightarrow \infty} r(K_n)$. If $r = \infty$ there is nothing to prove, so assume that $r < \infty$. Passing to a subsequence of the $K_n$ we may assume that $r(K_n) = r$ for every $n \in \mathbb{N}$, so that each $K_n$ has property $(P_r)$. Let $U$ be a neighbourhood of $K$. The $K_n$ form a decreasing sequence and $K = \bigcap K_n$, so there exists $n_0$ such that $K_n \subseteq U$ for every $n \geq n_0$. Since $K_{n_0}$ has property $(P_r)$, it has a \emph{pm}--neighbourhood $N \subseteq U$ such that ${\rm rk}\ H_1(N) \leq r$. But $N$ is also a neighbourhood of $K$, so $K$ has property $(P_r)$ too.
\end{proof}
\begin{itemize}
	\item [(P4)] \emph{Nullity}. Suppose $K$ has $\check{H}^2(K) = 0$. Then $r(K) = 0$ if, and only if, $K$ has arbitrarily small \emph{pm}--neighbourhoods $N$ such that each component of $N$ is a $3$--cell. By a $3$--cell we mean a set homeomorphic to the closed unit $3$--ball $\mathbb{B}^3 \subseteq \mathbb{R}^3$.
\end{itemize}
\begin{proof} This is proved in \cite[Theorem 16]{mio6} for $K$ connected (in that case $N$ can be taken to be connected too and it is itself a $3$--cell). The same argument given there covers this slightly more general situation where $K$ is not assumed to be connected.
\end{proof}

Finally, to state the subadditivity property we need to introduce some notation and a definition:

\begin{notation} Consider the following subsets of $\mathbb{R}^3$, shown in Figure \ref{fig:tameaxes}:

\begin{enumerate}
	\item[] $Q_0$ is the parallelepiped $[-1,0] \times [-1,1] \times [-1,1]$,
	\item[] $Q_1$ is the parallelepiped $[0,1] \times [-1,1] \times [-1,1]$,
	\item[] $Q$ is the cube $Q_0 \cup Q_1$,
	\item[] $S$ is the square $Q_0 \cap Q_1 = \{0\} \times [-1,1] \times [-1,1]$,
	\item[] $\dot{S}$ denotes the square $S$ minus its edges.
\end{enumerate}

\begin{figure}[h]
\begin{pspicture}(0,0)(9.5,5)
	\rput[bl](0,0){\scalebox{0.75}{\includegraphics{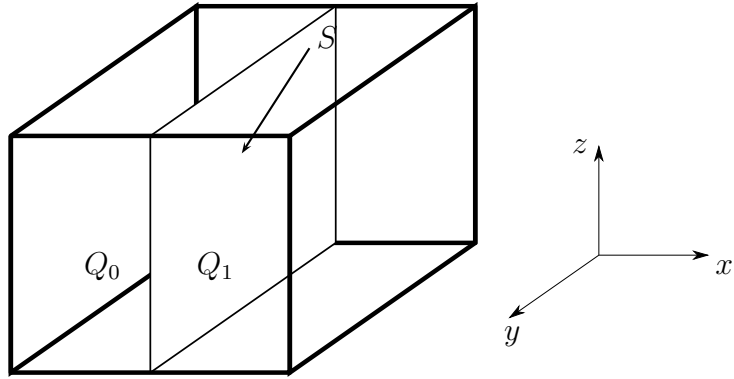}}}
	\psline{->}(4,4.4)(3.1,3) \rput[lb](4.1,4.4){$S$}
	\rput(1.25,1.5){$Q_0$} \rput(2.75,1.5){$Q_1$}
	\rput[bl](9.4,1.4){$x$} \rput[b](7.6,3){$z$} \rput[t](6.7,0.7){$y$}
\end{pspicture}
\caption{The setup for Definition \ref{def:tame} \label{fig:tameaxes}}
\end{figure}

\end{notation}

Suppose $K$ is expressed as the union of two compact sets $K_0$ and $K_1$. We call this a \emph{decomposition} of $K$ and introduce the following definition:

\begin{definition} \label{def:tame} A decomposition $K = K_0 \cup K_1$ is \emph{tame} if
\begin{enumerate}
	\item[({\it i}\/)] $K \cap S = K_0 \cap K_1 \subseteq \dot{S}$,
	\item[({\it ii}\/)] $K_0 \cap Q \subseteq Q_0$ and $K_1 \cap Q \subseteq Q_1$.
\end{enumerate}
\end{definition}

Definition \ref{def:tame} conveys the intuitive idea that $S$ realises \emph{geometrically} the purely set-theoretical decomposition $K = K_0 \cup K_1$. The part of $K$ that lies in $Q$ is structured in two ``halves'', $K \cap Q_0$ and $K \cap Q_1$. The first one sits in $Q_0$ and, because of condition ({\it ii}\/), is comprised exclusively of points from $K_0$. The second one sits in $Q_1$ and is comprised exclusively of points from $K_1$.

\begin{remark} It is convenient to widen Definition \ref{def:tame} slightly and say that a decomposition $K = K_0 \cup K_1$ is tame if there exists an ambient homeomorphism $h : \mathbb{R}^3 \longrightarrow \mathbb{R}^3$ that takes $K$, $K_0$ and $K_1$ onto sets that satisfy ({\it i}\/) and ({\it ii}\/) above. This convenient generalization has no consequences as far as $r(K)$ is concerned since $r$ is invariant under ambient homeomorphisms by (P2).
\end{remark}

A prototypical example of a tame decomposition (in the wider sense just mentioned) of a set $K$ would be as follows:

\begin{example} Take a plane $H \subseteq \mathbb{R}^3$ (having a nonempty intersection with $K$) and, denoting $H_0$ and $H_1$ the two closed halfspaces determined by $H$, let $K_0 := K \cap H_0$ and $K_1 := K \cap H_1$. Then $K = K_0 \cup K_1$ is a tame decomposition.
\end{example}

Now we can state the subadditivity property of $r$:

\begin{itemize}
	\item [(P5)] \emph{Subadditivity}. Let $K = K_0 \cup K_1$ be a tame decomposition of $K$ and assume that $\check{H}^1(K_0 \cap K_1) = 0$. Then the inequality $r(K) \geq r(K_0) + r(K_1)$ holds.
\end{itemize}
\begin{proof} See \cite[Theorem 22]{mio6}.
\end{proof}

If one thinks of $r$ as some sort of Betti number and writes the Mayer-Vietoris sequence for $K = K_0 \cup K_1$, the necessity of requiring that $\check{H}^1(K_0 \cap K_1)$ be zero is clear. The role of the tameness condition is more delicate. Roughly, it guarantees that certain geometrical constructions leading to the inequality $r(K) \geq r(K_0) + r(K_1)$ can be performed. Without this assumption the subadditivity property may be false (see an example in \cite{mio6}).

For our purposes in this paper it is convenient to have a more flexible way of checking whether a decomposition is tame. The following proposition provides this:

\begin{proposition} \label{prop:tame} Let $K = K_0 \cup K_1$ be a decomposition of a compact set $K \subseteq \mathbb{R}^3$. Suppose that there exists an embedding $e : Q \longrightarrow \mathbb{R}^3$ such that:
\begin{enumerate}
	\item $K \cap e(S) = K_0 \cap K_1 \subseteq e(\dot{S})$,
	\item $K_0 \cap e(Q) \subseteq e(Q_0)$ and $K_1 \cap e(Q) \subseteq e(Q_1)$.
\end{enumerate}

Then the decomposition is tame.
\end{proposition}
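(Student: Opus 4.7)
The plan is as follows. By the Remark following Definition~\ref{def:tame}, to establish tameness of the decomposition $K=K_0\cup K_1$ I only need to exhibit some ambient homeomorphism $h\colon\mathbb{R}^3\to\mathbb{R}^3$ such that $h(K),h(K_0),h(K_1)$ satisfy conditions (\emph{i}) and (\emph{ii}) of Definition~\ref{def:tame} relative to the standard configuration $Q=Q_0\cup Q_1$ with separating square $S$. The natural candidate is any ambient homeomorphism $h$ extending $e^{-1}\colon e(Q)\to Q$.

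Assuming such an $h$ is available, the proposition follows formally by pushing the hypotheses through $h$. Since $h$ sends $e(S)\mapsto S$, $e(\dot{S})\mapsto\dot{S}$, $e(Q)\mapsto Q$ and $e(Q_i)\mapsto Q_i$, applying $h$ to the set-theoretic identities and inclusions in (1) gives
\[
h(K)\cap S \;=\; h(K_0)\cap h(K_1) \;\subseteq\; \dot{S},
\]
which is exactly condition (\emph{i}); and applying $h$ to (2) yields $h(K_i)\cap Q\subseteq Q_i$ for $i=0,1$, which is exactly condition (\emph{ii}).

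The substantive step is therefore the construction of such an $h$, i.e.\ the extension of the embedding $e^{-1}\colon e(Q)\to Q\subseteq\mathbb{R}^3$ to an ambient homeomorphism of $\mathbb{R}^3$. Here $Q$ is a PL $3$--cell, and in the applications of this proposition elsewhere in the paper the embedding $e$ is drawn piecewise--linearly (equivalently, locally flat); accordingly, I would appeal to the PL Schoenflies theorem together with the PL isotopy extension theorem to produce an ambient isotopy of $\mathbb{R}^3$ carrying $e$ to the inclusion $Q\hookrightarrow\mathbb{R}^3$, and take $h$ to be the time--one map of its inverse. This step is where the real work lies, since for a wildly embedded cube (such as a horned ball) no such extension exists in general; but the PL nature of the embedded copies of $Q$ used in the sequel renders this obstacle moot.
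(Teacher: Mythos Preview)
Your overall plan is the same as the paper's: extend $e^{-1}$ to an ambient homeomorphism $h$ and then push (1) and (2) through $h$ to recover conditions (\emph{i}) and (\emph{ii}) of Definition~\ref{def:tame}. That formal part of your argument is fine.

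The gap is in how you justify the extension of $e^{-1}$. The proposition is stated for an \emph{arbitrary} embedding $e:Q\to\mathbb{R}^3$, with no PL or local flatness hypothesis, so invoking the PL Sch\"onflies and PL isotopy extension theorems does not prove what is claimed. Your fallback---that in the applications $e$ happens to be PL---is also not correct: when the proposition is used in Proposition~\ref{prop:tame_dec}, the embedding is $e'=g\circ e$ with $g(x,y,z)=b(g_0(x,y),z)$, and the bicollar $b=c^+\cup c^-$ comes from Brown's \emph{topological} collaring theorem. There is no reason for $e'$ to be PL or locally flat on $\partial Q$, so your argument would not cover the case you actually need.

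The paper handles this without any extra hypothesis by a shrinking trick. For $0<s<1$ close to $1$, the scaled cube $sQ$ still satisfies (1) and (2) (with $sS$, $sQ_i$ in place of $S$, $Q_i$), and $\partial(sQ)$ is bicollared in the open set $\mathrm{int}\,Q$; hence $e(\partial(sQ))$ is bicollared in $e(\mathrm{int}\,Q)\subseteq\mathbb{R}^3$. Replacing $e$ by $(x,y,z)\mapsto e(sx,sy,sz)$ one may therefore assume $\partial\,e(Q)$ is a bicollared $2$--sphere, and then Brown's topological Sch\"onflies theorem gives the desired extension $\hat{e}:\mathbb{R}^3\to\mathbb{R}^3$; set $h=\hat{e}^{-1}$. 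This is the missing idea in your proposal.
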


\label{pg:schon} In the proof we will make use of the Sch\"onflies conjecture, so we devote a few lines to explain it. A $2$--\emph{sphere} $\Sigma \subseteq \mathbb{R}^2$ is simply an embedded copy of the unit two dimensional sphere $\mathbb{S}^2 \subseteq \mathbb{R}^3$. The Sch\"onflies conjecture states that any two $2$--spheres $\Sigma_1$ and $\Sigma_2$ in $\mathbb{R}^3$ are equivalently embedded; that is, there exists an ambient homeomorphism that sends one of them onto the other. As stated, this result is false due to the existence of wild spheres, so some additional hypothesis needs to be added. Brown gave a beautiful version for bicollared spheres. A $2$--sphere $\Sigma \subseteq \mathbb{R}^3$ is \emph{bicollared} if there exists an embedding $b : \Sigma \times [-1,1] \longrightarrow \mathbb{R}^3$ such that $b(p,0) = p$ for every $p \in \Sigma$. Brown proved \cite[Theorem 5, p. 76]{brown1} that the Sch\"onflies conjecture is true for bicollared spheres; that is, if $\Sigma_1, \Sigma_2 \subseteq \mathbb{R}^3$ are two bicollared spheres, there exists an ambient homeomorphism that sends one of them onto the other.

Now let $B_1, B_2 \subseteq \mathbb{R}^3$ be two $3$--cells and $e : B_1 \longrightarrow B_2$ a homeomorphism. Assume that both $\partial B_1$ and $\partial B_2$ are bicollared. Then $e$ admits an extension to a homeomorphism $\hat{e}$ of all $\mathbb{R}^3$. The reason is that the Sch\"onflies conjecture (in Brown's version) allows one to reduce this problem to the case where $B_1 = B_2 = \mathbb{B}^3$, where $\mathbb{B}^3$ denotes the closed unit ball in Euclidean space $\mathbb{R}^3$, and then extending $e$ is a simple matter (do it radially).

\begin{proof} We need to construct a homeomorphism $h$ of $\mathbb{R}^3$ that sends $K_0$ and $K_1$ onto sets satisfying the conditions in Definition \ref{def:tame}. Let $B_1$ and $B_2$ be the $3$--cells $Q$ and $e(Q)$, respectively, and consider the $2$--spheres $\partial B_1$ and $\partial B_2$. Clearly $\partial B_1$ is bicollared, since $B_1$ is a polyhedral cube. As for $\partial B_2$, the following claim shows that we can assume it to be bicollared too:
\medskip

{\it Claim.} Maybe after a suitable improvement of $e$, we may assume that the boundary of $e(Q)$ is bicollared.

{\it Proof of claim.} For any number $0<s<1$ denote by $sQ$ the image of $Q$ under the mapping $(x,y,z) \mapsto (sx,sy,sz)$, and similarly for $sQ_0$, $sQ_1$ and $sS$. It is easy to check that if $s$ is very close to $1$, conditions (1) and (2) in the statement of this proposition are still satisfied when we replace $Q$, $Q_0$, $Q_1$ and $S$ by their scaled down versions $sQ$, $sQ_0$, $sQ_1$ and $sS$. That is,
\begin{enumerate}
	\item[(1$'$)] $K \cap e(sS) = K_0 \cap K_1 \subseteq e(s\dot{S})$,
	\item[(2$'$)] $K_0 \cap e(sQ) \subseteq e(sQ_0)$ and $K_1 \cap e(sQ) \subseteq e(sQ_1)$.
\end{enumerate}

The boundary of $sQ$ is just a polyhedral sphere slightly smaller than the boundary of $Q$, and it is clearly bicollared in the interior of $Q$. Thus $e(sQ)$ is bicollared in $e({\rm int}\ Q)$, which is an open subset of $\mathbb{R}^3$, and so $e(sQ)$ is actually bicollared in $\mathbb{R}^3$. Replacing the embedding $e : Q \longrightarrow \mathbb{R}^3$ by $(x,y,z) \longmapsto e(sx,sy,sz)$ the result follows. $_{\blacksquare}$
\medskip

Now using the Sch\"onflies' theorem we see that $e$ extends to a homeomorphism $\hat{e} : \mathbb{R}^3 \longrightarrow \mathbb{R}^3$ as explained earlier. Let $h := \hat{e}^{-1}$. Applying $h$ throughout to (1) and (2) in the statement of this proposition and cancelling any appearance of $he$ (which is the identity by construction) it follows that the decomposition $h(K) = h(K_0) \cup h(K_1)$ satisfies (1) and (2) of Definition \ref{def:tame}, as was to be proved.
\end{proof}

\section{$r(T)$ when $T$ is a totally disconnected, compact set} \label{sec:cantor}

Throughout this section $T$ will always denote a compact, totally disconnected subset of $\mathbb{R}^3$. Recall that a set is totally disconnected if its connected components are just singletons. As mentioned earlier, we shall say that $T$ is rectifiable if there exists a homeomorphism of $\mathbb{R}^3$ that sends $T$ into a straight line $L$. The following remark follows easily from this definition:

\begin{remark} \label{rem:r0tame} Let $T$ be a compact, totally disconnected set. If $T$ is rectifiable, then it has arbitrarily close neighbourhoods $N$ which are unions of disjoint polyhedral $3$--cells. As a consequence, $\mathbb{R}^3 - T$ is simply connected and also $r(T) = 0$.
\end{remark}
\begin{proof} Up to an ambient homeomorphism we may assume that $T$ lies in a straight line $L$; say the $x$--axis for definiteness. Notice that this does not alter the fundamental group of $\mathbb{R}^3 - T$ or the value of $r(T)$, since both are invariant under ambient homeomorphisms. Let $U$ be a neighbourhood of $T$ in $\mathbb{R}^3$, so that $U \cap L$ is a neighbourhood of $T$ in $L$. Since $T$ is totally disconnected, it has a compact neighbourhood $J$ in $L$ which is a union of finitely many disjoint intervals $J_1, \ldots, J_n$ and satisfies $J \subseteq U \cap L$. This $J$ can be thickened to obtain a \emph{pm}--neighbourhood $N$ of $T$ in $\mathbb{R}^3$ simply choosing $\varepsilon > 0$ so small that $N := J \times [-\varepsilon,\varepsilon] \times [-\varepsilon,\varepsilon]$ is still contained in $U$. Clearly each component of $N$ is, by construction, a polyhedral $3$--cell. Letting $U$ vary we see that $T$ has arbitrarily close neighbourhoods $N$ with this property. In particular, since these satisfy $H_1(N) = 0$, it follows that $r(T) = 0$. Also, the complement in $\mathbb{R}^3$ of any of these neighbourhoods $N$ is simply connected, and so the same is true of the complement of $T$.
\end{proof}

The main theorem in this section strengthens the previous remark as follows:

\begin{theorem} \label{teo:tot_disc} Let $T \subseteq \mathbb{R}^3$ be a compact, totally disconnected set. Then the following alternative holds:
\begin{itemize}
	\item[({\it i}\/)] if $T$ is rectifiable, then $r(T) = 0$,
	\item[({\it ii}\/)] if $T$ is not rectifiable, then $r(T) = \infty$.
\end{itemize}
Thus $r(T)$ cannot take any intermediate values between $0$ and $\infty$.
\end{theorem}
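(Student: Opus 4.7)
Part (i) follows at once from Remark \ref{rem:r0tame}. For part (ii) I would argue the contrapositive: if $r(T) < \infty$, then $T$ is rectifiable. Throughout I will use the standard equivalence, for totally disconnected compact $T \subseteq \mathbb{R}^3$: \emph{$T$ is rectifiable if and only if $r(T) = 0$, equivalently, iff $T$ has arbitrarily small pm-neighbourhoods whose components are $3$-cells}. The second equivalence is (P4); the first combines Remark \ref{rem:r0tame} with the classical results of Bing and Moise on tameness of $0$-dimensional compacta in $\mathbb{R}^3$. In particular, non-rectifiable sets have $r \geq 1$.

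Set $r := r(T) < \infty$ and introduce the \emph{wild accumulation set} $W^*(T) := \{p \in T : \text{every clopen neighbourhood of } p \text{ in } T \text{ is non-rectifiable}\}$. The first step will be to show that $|W^*(T)| \leq r$. If there were $r+1$ distinct wild points, I would enclose them in pairwise disjoint clopen neighbourhoods $V_1,\dots,V_{r+1}$ in $T$ (possible by total disconnectedness); each $V_i$ is a clopen neighbourhood of a wild point, hence non-rectifiable, so $r(V_i) \geq 1$. These $V_i$ together with $T \setminus \bigsqcup V_i$ are pairwise disjoint compact subsets of $\mathbb{R}^3$ and can be separated by parallel planes; iterating Proposition \ref{prop:tame} expresses $T$ as an iterated tame decomposition, and since all pairwise intersections are empty, subadditivity (P5) gives $r(T) \geq r+1$, contradicting $r(T) = r$.

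The key step is a \emph{shell lemma}: any compact totally disconnected $V$ with $|W^*(V)| \leq 1$ is rectifiable. If $W^*(V) = \emptyset$, a finite subcover by rectifiable clopen neighbourhoods, refined disjointly (clopen subsets of rectifiable sets are rectifiable), writes $V$ as a finite disjoint union of rectifiable pieces whose defining sequences of cells can be combined. If $W^*(V) = \{p\}$, I would choose radii $r_n \downarrow 0$ with $V \cap \partial B(p,r_n) = \emptyset$ and set $S_n := V \cap \overline{B(p,r_n) \setminus B(p,r_{n+1})}$; each $S_n$ is compact, clopen in $V$, disjoint from $p$, and satisfies $W^*(S_n) = \emptyset$, hence is rectifiable by the previous case. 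Given $\varepsilon > 0$, I would pick $N$ with $r_N < \varepsilon/3$, enclose $\{p\} \cup \bigcup_{n \geq N} S_n$ in a single polyhedral $3$-cell $B_p$ of diameter $<\varepsilon$, and for each of the finitely many $n < N$ use rectifiability of $S_n$ to choose a pm-neighbourhood of $S_n$ consisting of disjoint $3$-cells of sufficiently small diameter so as to lie inside $B_\varepsilon(V)$ and outside $B_p$ (feasible because these $S_n$ lie at positive distances from $p$ and from each other). Their union is a pm-neighbourhood of $V$ inside $B_\varepsilon(V)$ with all components $3$-cells, so $r(V) = 0$ and $V$ is rectifiable.

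To finish: write $W^*(T) = \{p_1,\dots,p_m\}$ with $m \leq r$, pick pairwise disjoint clopen neighbourhoods $V_i \ni p_i$ in $T$ with $V_i \cap W^*(T) = \{p_i\}$, and check the identities $W^*(V_i) = W^*(T) \cap V_i = \{p_i\}$ (which hold for clopen $V_i$ because clopen neighbourhoods of $q \in V_i$ in $V_i$ coincide with clopen neighbourhoods of $q$ in $T$ contained in $V_i$) and $W^*(T \setminus \bigsqcup V_i) = \emptyset$. The shell lemma applies to each piece, so each is rectifiable; combining their defining sequences of disjoint $3$-cells at sufficiently fine scales (where the pieces remain pairwise disjoint) yields one for $T$, proving $T$ is rectifiable. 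The main obstacle will be the geometric bookkeeping in the shell lemma, particularly arranging the outer cell-neighbourhoods of the $S_n$ with $n<N$ to stay disjoint from the shrinking cell $B_p$ at every scale; no further topological ingredients beyond those already available are required.
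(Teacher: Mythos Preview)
Your argument is essentially correct and reaches the same conclusion as the paper, but the route is genuinely different, and there is one point where your justification is wrong even though the step itself is sound.

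\textbf{The flawed justification.} You assert that the pairwise disjoint clopen pieces $V_1,\ldots,V_{r+1},\, T\setminus\bigsqcup V_i$ ``can be separated by parallel planes''. This is false in general: for Antoine's necklace the pieces $C_j = C\cap T_{1,j}$ sit inside linked solid tori and no affine plane separates any one from the rest. Fortunately the conclusion you need---that $r(T)\geq\sum r(V_i)$---is still true. Either observe directly from the definition that $r$ is additive on disjoint unions (the paper itself calls this a ``trivial fact'' in the proof of Theorem~\ref{teo:top}), or note that a decomposition $K=K_0\cup K_1$ with $K_0\cap K_1=\emptyset$ is vacuously tame: in Proposition~\ref{prop:tame} take $e(Q)$ to be any small cube disjoint from $K$, and both conditions hold trivially. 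Then (P5) applies since $\check H^1(\emptyset)=0$.

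\textbf{Comparison with the paper.} The paper fixes a \emph{pm}-neighbourhood basis $(N_k)$ realising $r(T)=r$, proves an additivity statement for the $H_1$-ranks of components under refinement (Claim~1), and defines $s(p):=\lim_k \mathrm{rk}\,H_1(C(N_k;p))$. It shows that $s(p)\neq 0$ for at most $r$ points, that $s(p)=0$ forces local rectifiability at $p$ (Claim~2, via (P4) and Lemma~\ref{lem:r0tame}), and then invokes Bing's Lemma~\ref{lem:dico} to conclude. Your approach bypasses the neighbourhood-level rank bookkeeping entirely: you work with the locally-non-rectifiable locus $W^*(T)$ abstractly and bound its cardinality using (P5), which is cleaner. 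On the other hand, your shell lemma is essentially a self-contained reproof of the special case of Lemma~\ref{lem:dico} needed here; since Lemma~\ref{lem:dico} is already available to you, you could shorten the argument considerably by noting that $|W^*(T)|\leq r<\infty$ means $T$ is locally rectifiable away from a finite set, and quoting Lemma~\ref{lem:dico} directly. What your route buys is a more conceptual use of the properties (P4) and (P5); what the paper's route buys is finer control of the \emph{pm}-neighbourhood structure, which it obtains for free from the rank analysis.
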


 Before proving the theorem it may be instructive to examine the particular example of Antoine's necklace $C$, already mentioned in Section \ref{sec:statement}, because we can easily check that it is not rectifiable and also show, exploiting its self similarity properties, that $r(C) = \infty$. A detailed exposition can be found in the original paper by Antoine \cite[\S 78, p. 311 ff.]{antoine1} or more modern references such as \cite[pp. 42 ff.]{davermanvenema1} or \cite[\S 18, pp. 127 ff.]{moise2}.

Antoine's necklace is obtained as the intersection of a decreasing sequence of compact manifolds $N_k$ each of which is a ``necklace'' comprised of several linked tori $T_{k,j}$. The first one, $N_0$, consists of a single unknotted solid torus $T_0 \subseteq \mathbb{R}^3$. The next one, $N_1$, is the union of $n$ of solid tori $T_{1,1},T_{1,2},\ldots,T_{1,n}$ contained in the interior of $T_0$ and linked as shown in Figure \ref{fig:antoine1} for $n = 5$ (the drawing shows only one $T_{1,j}$ in full and just the cores of remaining tori). Place a similar (but scaled down) arrangement of $n$ linked solid tori $T_{2,j}$ inside each of the $T_{1,j}$. Then $N_2$ is the union of all these second generation tori, of which there are $n^2$ in total. Repeating the construction inductively yields the decreasing sequence of sets $N_k$ and their intersection $C = \bigcap_k N_k$ is Antoine's necklace. Notice that the diameter of the connected components of the $N_k$ approaches zero as $k \longrightarrow +\infty$. This implies that $C$ is totally disconnected (in fact, it is a Cantor set).

\begin{figure}[h]
\begin{pspicture}(0,0)(10,4)
\rput[bl](0,0){\scalebox{0.75}{\includegraphics{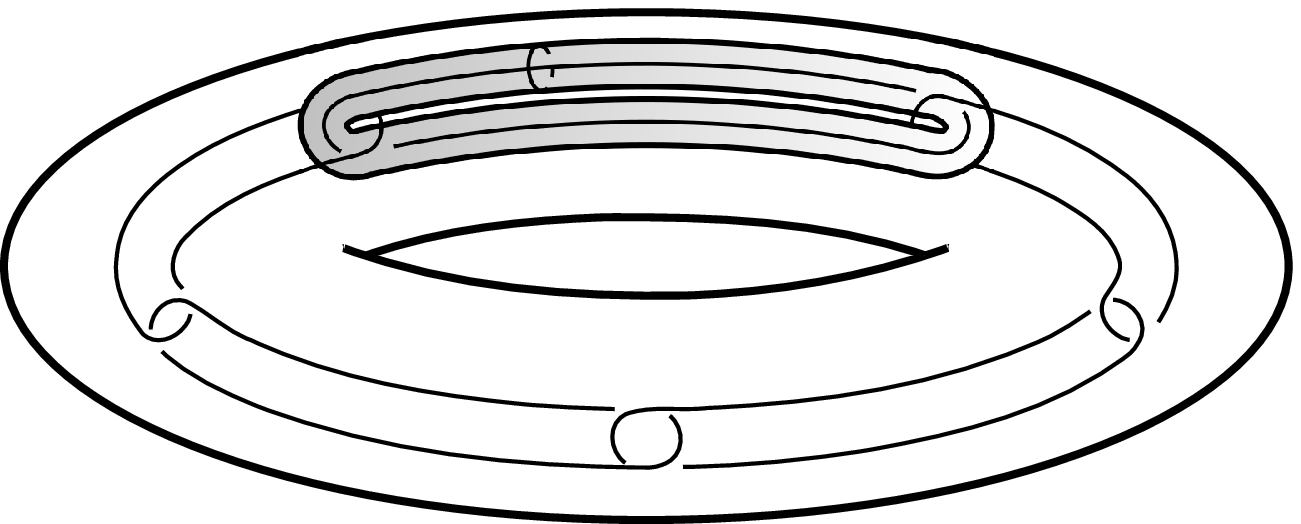}}}
\rput(9.5,0.6){$T_0$}
\rput[bl](5.2,2.4){$T_{1,j}$}
\end{pspicture}
\caption{\label{fig:antoine1}}
\end{figure}

The crucial property of $C$ is that its complement in $\mathbb{R}^3$ is not simply connected. A formal proof can be found in the book by Moise \cite[Theorem 4, p. 131]{moise2}, but it is easy to convince oneself intuitively as follows. Consider a meridian $\mu$ of $T_0$ (contained in the surface of $T_0$). If we tried to contract it to a point we would surely run into some of the $T_{1,j}$ because of how they are linked to form a closed chain. On a finer scale, and for the same reason, we would also run into some of the $T_{2,j}$, and so on. Thus $\mu$ is not contractible in the complement of any of the $N_k$, and so it cannot be contractible in the complement of $C$ either.

It follows from the previous paragraph and Remark \ref{rem:r0tame} that $C$ is not rectifiable. Let us check also that $r(C) = \infty$, as it should be according to Theorem \ref{teo:tot_disc}. By the very nature of the construction of $C$ each $T_{1,j}$ contains a Cantor set $C_j$ that is ambient homeomorphic to all of $C$; namely $C_j = C \cap T_{1,j}$. Hence $r(C) = r(C_j)$ for every $1 \leq j \leq n$. Moreover $C$ is the disjoint union of the closed sets $C_1, C_2, \ldots, C_n$ so $r(C) = \sum_{j=1}^n r(C_j) = n r(C)$. Thus either $r(C) = 0$ or $r(C) = \infty$. Suppose $r(C) = 0$ were true. Then, by the nullity property (P4), $C$ would have arbitrarily small \emph{pm}--neighbourhoods $N$ that are unions of $3$--balls. The complement in $\mathbb{R}^3$ of any such neighbourhood is clearly simply connected, and therefore the same would be true of $C$; that is, $\mathbb{R}^3 - C$ would be simply connected. Since this is not the case, we conclude that $r(C) \geq 1$ and so $r(C) = \infty$.

In proving Theorem \ref{teo:tot_disc} we shall make use of two auxiliary lemmas which we state below as Lemmas \ref{lem:r0tame} and \ref{lem:dico}. Both of them can be found in a paper by Bing \cite{bing4}. Although he proves them for Cantor sets, the argument applies to any compact, totally disconnected set $T$. Also, we warn the reader that Bing uses the more standard terminology ``tame'' for what we call ``rectifiable'' (we find the latter more expressive and, in this paper, less prone to confusion with other usages of ``tame'').

\begin{lemma} \label{lem:r0tame} Let $T \subseteq \mathbb{R}^3$ be a compact, totally disconnected set. Then $T$ is rectifiable if, and only if, it has arbitrarily small \emph{pm}--neighbourhoods $N$ such that every component of $N$ is a $3$--cell.
\end{lemma}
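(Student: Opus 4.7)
The forward direction is essentially Remark \ref{rem:r0tame}: up to an ambient homeomorphism we may suppose $T$ lies on the $x$--axis $L$, and then thicken a finite subcover of $T$ in $L$ by disjoint intervals to a pm--neighbourhood whose components are polyhedral $3$--cells.

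For the nontrivial direction, the plan is to produce an ambient homeomorphism $h$ of $\mathbb{R}^3$ with $h(T) \subseteq L$ as the uniform limit of a sequence of ambient homeomorphisms $h_k$ obtained by a successive straightening procedure. First, using the hypothesis together with the total disconnectedness of $T$, I would extract a nested sequence of pm--neighbourhoods $N_1 \supseteq N_2 \supseteq \ldots$ of $T$ with $\bigcap_k N_k = T$, such that each $N_k$ is a disjoint union of finitely many polyhedral $3$--cells and the diameters of the components of $N_k$ tend to zero.

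I would then construct the $h_k$ inductively. Each component of $N_k$ is a polyhedral $3$--cell whose boundary is polyhedral, hence bicollared, so by Brown's version of the Sch\"onflies theorem (already used on page \pageref{pg:schon} of the excerpt) each such component is ambient homeomorphic to a round ball. Accordingly, $h_1$ is chosen to send the components of $N_1$ onto pairwise disjoint round balls centred at distinct points of $L$, with radii tending to zero. Inductively, given $h_k$, for each component $B$ of $h_k(N_k)$ (already a round ball centred at a point of $L$) the components of $h_k(N_{k+1})$ contained in $B$ are finitely many disjoint $3$--cells with bicollared boundaries. Applying Sch\"onflies once more inside $B$, there is an ambient homeomorphism of $\mathbb{R}^3$ supported in a slightly smaller concentric ball inside $B$ that carries these $3$--cells onto even smaller disjoint round balls centred at distinct points of $L \cap B$. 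Performing such a correction inside every component of $h_k(N_k)$ yields $h_{k+1}$. By shrinking the new balls aggressively at each stage, the sup distance between $h_{k+1}$ and $h_k$, and between their inverses, can be made smaller than $2^{-k}$, so the sequence $(h_k)$ converges uniformly to a homeomorphism $h$ of $\mathbb{R}^3$. Since $T \subseteq N_k$ for every $k$, the image $h(T)$ lies in the intersection of the shrinking families of round balls along $L$, and hence $h(T) \subseteq L$, proving rectifiability.

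The main obstacle is the standard convergence issue for an infinite composition of ambient homeomorphisms: one must control both the supports of the successive corrections and the moduli of continuity of the $h_k^{\pm 1}$ so that the limit is a genuine homeomorphism rather than merely a continuous surjection. This is handled by the twin requirements that the correction at step $k+1$ be supported inside the concentric shrinkage of a component of $h_k(N_k)$ (whose diameter is arbitrarily small once $k$ is large), and that the new balls introduced at step $k+1$ have radii so small that both $\|h_{k+1} - h_k\|_\infty$ and $\|h_{k+1}^{-1} - h_k^{-1}\|_\infty$ are bounded by $2^{-k}$. A secondary subtlety is that after applying $h_k$ the $3$--cells in question are only topological, not polyhedral; but they remain bicollared (as ambient images of polyhedral boundaries), which is all that Brown's theorem requires.
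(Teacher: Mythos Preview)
Your sketch is correct and is precisely the inductive straightening argument the paper defers to via its citation of Bing. One small correction on the convergence bookkeeping: the bound on $\|h_{k+1}^{-1}-h_k^{-1}\|_\infty$ is governed not by the radii of the \emph{new} balls but by the diameters of the components of $N_k$ (since $h_k^{-1}$ carries each ball of $h_k(N_k)$ back onto a component of $N_k$), so you should arrange from the outset that those component diameters are summable.
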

\begin{proof} See the steps outlined in the proof of \cite[Theorem 1.1, p. 435 ff.]{bing4}.
\end{proof}

Let us say that $T$ is \emph{locally rectifiable} at a point $p \in T$ if there exist a neighbourhood $U$ of $p$ in $\mathbb{R}^3$ and a homeomorphism of $\mathbb{R}^3$ that sends $U \cap T$ into a straight line.

\begin{lemma} \label{lem:dico} Let $T \subseteq \mathbb{R}^3$ be a compact, totally disconnected set. If $T$ is locally rectifiable at every point except for, possibly, a finite set of points $F \subseteq T$, then $T$ is rectifiable.
\end{lemma}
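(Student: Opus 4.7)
My plan is to invoke Lemma \ref{lem:r0tame}: to show $T$ is rectifiable it suffices to produce, inside any given open neighbourhood $U$ of $T$, a \emph{pm}-neighbourhood $N \subseteq U$ whose components are all $3$-cells. The strategy is to absorb the finitely many exceptional points of $F$ into small disjoint polyhedral $3$-cells and then handle the remainder, which is locally rectifiable everywhere, by a finite covering and a disjoint clopen refinement.

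Fix an open neighbourhood $U$ of $T$. For each $p_i \in F$ I would exploit the total disconnectedness of $T$ (so $\{p_i\}$ has arbitrarily small clopen-in-$T$ neighbourhoods) to find a polyhedral $3$-cell $B_i \subseteq U$ with $p_i \in \mathrm{int}(B_i)$ and $\partial B_i \cap T = \emptyset$. Taking the $B_i$ disjoint by shrinking, the union $\bigcup_i B_i$ already gives a \emph{pm}-neighbourhood (in $U$) of the portion $T \cap \bigcup_i B_i$ whose components are $3$-cells.

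Now consider $T' := T \setminus \bigcup_i \mathrm{int}(B_i)$; this is a compact totally disconnected subset of $U \setminus \bigcup_i B_i$, and locally rectifiable at each of its points. For every $q \in T'$ the hypothesis provides an open neighbourhood $V_q$ and an ambient homeomorphism of $\mathbb{R}^3$ sending $V_q \cap T$ into a straight line, so in particular $V_q \cap T$ is rectifiable. By compactness, finitely many $V_{q_1}, \ldots, V_{q_n}$ (which I can assume to lie in $U \setminus \bigcup_i B_i$) cover $T'$. Using zero-dimensionality of $T'$, I refine this to a partition $T' = T'_1 \sqcup \cdots \sqcup T'_m$ into clopen-in-$T'$ sets with each $T'_j \subseteq V_{q_{i(j)}}$. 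Each $T'_j$ is then a subset of a rectifiable set, hence rectifiable, so Lemma \ref{lem:r0tame} yields for it a \emph{pm}-neighbourhood $N'_j$ whose components are $3$-cells. Since the $T'_j$ are pairwise separated compact sets lying away from $\bigcup_i B_i$, I can shrink the $N'_j$ to be pairwise disjoint and to avoid $\bigcup_i B_i$. Setting $N := \bigcup_i B_i \cup \bigcup_j N'_j \subseteq U$ gives the desired \emph{pm}-neighbourhood of $T$ with $3$-cell components, and Lemma \ref{lem:r0tame} concludes that $T$ is rectifiable.

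The main obstacle will be the two geometric manoeuvres at the heart of the argument: finding for each $p_i$ a polyhedral $3$-cell whose boundary entirely avoids $T$, and refining an arbitrary finite open cover of the compact totally disconnected set $T'$ to a partition into disjoint clopen pieces subordinate to the cover. Both reduce to the fact that a compact totally disconnected subset of $\mathbb{R}^3$ has a basis of clopen sets of arbitrarily small diameter, so the technical substance lies in combining this with the polyhedral structure needed to turn the clopen pieces into genuine \emph{pm}-neighbourhoods.
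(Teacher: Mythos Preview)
The paper does not actually prove this lemma; it just cites Bing. Your outline is in the spirit of Bing's argument (produce arbitrarily small \emph{pm}--neighbourhoods whose components are $3$--cells and invoke Lemma~\ref{lem:r0tame}), and the treatment of $T'$ via a finite cover and a clopen partition is correct.

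The genuine gap is the step where you ``exploit the total disconnectedness of $T$'' to obtain a polyhedral $3$--cell $B_i$ about $p_i$ with $\partial B_i\cap T=\emptyset$. A clopen neighbourhood of $p_i$ in $T$ can certainly be thickened to a small \emph{pm}--neighbourhood in $\mathbb{R}^3$ whose boundary misses $T$, but there is no reason its components should be $3$--cells: by Lemma~\ref{lem:r0tame} this is precisely what separates rectifiable from non-rectifiable totally disconnected sets, so it cannot follow from zero--dimensionality alone. Concretely, for a wild Cantor set such as Antoine's necklace the small \emph{pm}--neighbourhoods one gets this way are chains of solid tori, not cells, and the existence of a small $2$--sphere about a given point that misses the necklace is exactly the delicate issue. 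What rescues the argument is the hypothesis you have not yet used at this stage: local rectifiability at every point of $T\setminus F$. Start with any small polyhedral cube $Q_i$ containing $p_i$ in its interior and avoiding the other points of $F$; then $\partial Q_i\cap T\subseteq T\setminus F$, so every point of $\partial Q_i\cap T$ has a neighbourhood in which an ambient homeomorphism carries $T$ into a line. In such a neighbourhood a small isotopy supported there pushes $\partial Q_i$ off $T$, and by compactness finitely many such pushes (performed in disjoint neighbourhoods) yield the desired $3$--cell $B_i$. With this correction your plan goes through and matches Bing's approach.
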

\begin{proof} See \cite[Theorem 4.1, p. 440]{bing4} and the remark following it.
\end{proof}

Now we can prove the main result of this section.

\begin{proof}[Proof of Theorem \ref{teo:tot_disc}] Part ({\it i}\/) is already contained in Remark \ref{rem:r0tame}; we only need to prove part ({\it ii}\/), and we are going to establish its contrapositive. Therefore, assume that $r(T) = r < \infty$. Choose a \emph{pm}--neighbourhood basis $\{N_k\}$ of $T$ such that the following properties are satisfied:
\begin{enumerate}
	\item[(N1)] $N_{k+1} \subseteq {\rm int}\ N_k$ for each $k$,
	\item[(N2)] each component of every $N_k$ meets $T$,
	\item[(N3)] ${\rm rk}\ H_1(N_k) = r$ for every $k$,
	\item[(N4)] for every \emph{pm}--neighbourhood $N$ of $T$ contained in $N_1$, ${\rm rk}\ H_1(N) \geq r$.
\end{enumerate}

That such a \emph{pm}--neighbourhood basis $\{N_k\}$ exists follows easily from the definition of $r(T)$. It suffices to show that $T$ is locally rectifiable at every point $p \in T$ except for, possibly, a finite number of them; then Lemma \ref{lem:dico} entails that $T$ is actually rectifiable, proving the theorem. We begin with two auxiliary claims:
\medskip

{\it Claim 1.} Let $\ell \geq k$. Denote $C_1, \ldots, C_n$ the components of $N_k$, and for each $1 \leq i \leq n$ denote $D_{ij}$ (for $1 \leq j \leq m_i$) the components of $N_{\ell}$ that lie in $C_i$. Then, for every $1 \leq i \leq n$, \[{\rm rk}\ H_1(C_i) = \sum_{j=1}^{m_i} {\rm rk}\ H_1(D_{ij}).\]
\smallskip

{\it Proof.} First we prove the inequality ${\rm rk}\ H_1(C_i) \leq \sum_j {\rm rk}\ H_1(D_{ij})$. Fix some $i_0$. Consider the \emph{pm}--neighbourhood $N$ of $T$ obtained from $N_k$ by deleting $C_{i_0}$ and replacing it with the $D_{i_0 j}$. That is, let \[N := C_1 \cup \ldots \cup C_{i_0 - 1} \cup \underbrace{D_{i_0 1} \cup \ldots \cup D_{i_0 m_{i_0}}}_{\text{in place of $C_{i_0}$}} \cup C_{i_0 + 1} \cup \ldots \cup C_n.\] Then clearly \[{\rm rk}\ H_1(N) = {\rm rk}\ H_1(N_k) - {\rm rk}\ H_1(C_{i_0}) + \sum_{j=1}^{m_{i_0}} {\rm rk}\ H_1(D_{i_0 j}).\] By (N3) we have ${\rm rk}\ H_1(N_k) = r$, and by (N4) we also have ${\rm rk}\ H_1(N) \geq r$. Replacing these above, \[r \leq {\rm rk}\ H_1(N) = r - {\rm rk}\ H_1(C_{i_0}) + \sum_{j=1}^{m_{i_0}} {\rm rk}\ H_1(D_{i_0 j}),\] which implies \begin{equation} \label{eq:1} {\rm rk}\ H_1(C_{i_0}) \leq \sum_{j=1}^{m_{i_0}} {\rm rk}\ H_1(D_{i_0 j}).\end{equation}

Now observe that $N_k$ is the disjoint union of the $C_i$ and $N_{\ell}$ is the disjoint union of the $D_{ij}$, so \[{\rm rk}\ H_1(N_k) = \sum_{i=1}^n {\rm rk}\ H_1(C_i) \quad \text{and} \quad {\rm rk}\ H_1(N_{\ell}) = \sum_{i=1}^n \sum_{j=1}^{m_i} {\rm rk}\ H_1(D_{ij}).\] Since ${\rm rk}\ H_1(N_k) = {\rm rk}\ H_1(N_{\ell}) = r$ by (N3), it follows that \[\sum_{i=1}^n {\rm rk}\ H_1(C_i) = \sum_{i=1}^n \sum_{j=1}^{m_i} {\rm rk}\ H_1(D_{ij}),\] or \[\sum_{i=1}^n \left( {\rm rk}\ H_1(C_i) - \sum_{j=1}^{m_i} {\rm rk}\ H_1(D_{ij}) \right) = 0.\] The inequality \eqref{eq:1} established above implies that each of the terms in this sum is nonpositive, so it follows that they must all be zero. This proves the claim. $_{\blacksquare}$
\medskip

{\it Claim 2.} Let $C$ be a component of some $N_{k_0}$ and assume that ${\rm rk}\ H_1(C) = 0$. Then $C \cap T$, which is a compact and totally disconnected set, is rectifiable.
\smallskip

{\it Proof.} Denote $N'_k := N_k \cap C$ for every $k \geq k_0$. Clearly $\{N'_k\}$ is a neighbourhood basis of $C \cap T$. Moreover, since each $N'_k$ is nothing but the union of those components of $N_k$ that lie in $C$, they are all polyhedral manifolds and so $\{N'_k\}$ is actually a \emph{pm}--neighbourhood basis of $C \cap T$. Also, from Claim 1 and the hypothesis ${\rm rk}\ H_1(C) = 0$ it follows that ${\rm rk}\ H_1(N'_k) = 0$ for every $k$. Therefore $r(C \cap T) = 0$ and, by the nullity property (P4), $C \cap T$ has arbitrarily small \emph{pm}--neighbourhoods $N$ such that every component of $N$ is a $3$--ball. Thus by Lemma \ref{lem:r0tame}, $C \cap T$ is rectifiable. $_{\blacksquare}$
\medskip

Now we can complete the proof of the theorem. For each point $p$ in $T$ let $C(N_k;p)$ be the connected component of $N_k$ that contains $p$. By Claim 1 above the sequence ${\rm rk}\ H_1(C(N_k;p))$ decreases as $k$ increases, so we can define \[s(p) := \lim_{k \rightarrow \infty} {\rm rk}\ H_1(C(N_k;p)).\]

Observe that at most $r$ different points $p \in T$ have $s(p) \neq 0$. Indeed, suppose that at least $p_1, \ldots, p_{r+1} \in T$ had $s(p_j) \geq 1$. Choose $k$ big enough so that all the $p_j$ lie in different components $C(N_k;p_j)$ of $N_k$. Evidently \[{\rm rk}\ H_1(N_k) \geq \sum_{j=1}^{r+1} {\rm rk}\ H_1(C(N_k;p_j)).\] However ${\rm rk}\ H_1(C(N_k;p_j)) \geq s(p_j) \geq 1$ for each $1 \leq j \leq r$ (by definition of $s$), so we conclude that ${\rm rk}\ H_1(N_k) \geq r+1$, contradicting condition (N4) in the choice of $\{N_k\}$.

Now let $p \in T$ be a point for which $s(p) = 0$. We have just seen that this happens for all but finitely points in $T$. Since $s(p)$ is the limit of a sequence of integers, for some $k_0$ we have ${\rm rk}\ H_1(C(N_k;p)) = s(p) = 0$ for $k \geq k_0$. By Claim 2 we see that $C(N_{k_0};p) \cap T$, which is a neighbourhood of $p$ in $T$, is rectifiable. Otherwise stated, $T$ is locally rectifiable at $p$. Thus $T$ is locally rectifiable at every point except for, possibly, a finite number of them, as was to be shown.
\end{proof}

\section{A topological theorem} \label{sec:top}

This section is devoted to the proof of the following result, whose notation and hypotheses will be tacitly assumed to hold throughout:

\begin{theorem} \label{teo:top} Let $B \subseteq \mathbb{R}^3$ be a compact $3$--manifold with a connected boundary $\partial B$. Notice that $\partial B$ is a closed surface. Suppose that $\partial B$ contains a compact, totally disconnected set $T$ such that $\partial B$ is locally flat at each $p \not \in T$. Then $r(B) \geq r(T)$.
\end{theorem}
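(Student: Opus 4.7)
The proof establishes $r(T) \leq r(B)$ directly. By Theorem \ref{teo:tot_disc}, $r(T) \in \{0, \infty\}$, so the inequality is automatic when $r(T) = 0$. We therefore focus on the case $r := r(B) < \infty$, in which we shall verify property $(P_r)$ for $T$: namely, that $T$ admits arbitrarily small \emph{pm}--neighbourhoods $M$ with $\mathrm{rk}\,H_1(M) \leq r$.

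Fix an open neighbourhood $V \supseteq T$. Because $T$ is compact and totally disconnected, choose a \emph{pm}--neighbourhood $U \subseteq V$ of $T$ whose components $U_i$ are small polyhedral $3$--cells; after a small PL perturbation, $\partial U$ is transverse to $\partial B$ away from $T$ (which is meaningful because $\partial B$ is locally flat there), so $\partial B \cap \partial U$ is a finite disjoint union of simple closed curves contained in $\partial B \setminus T$. By taking the $U_i$ small enough, arrange that on each $2$--sphere $\partial U_i$ the ``$B$--side'' of these curves is a disjoint union of polyhedral disks. Since the compact surface with boundary $\partial B \setminus \mathrm{int}(U)$ is locally flat, hence tame (by Bing--Moise) and bicollared (by Brown's theorem), we can then select a \emph{pm}--neighbourhood $N$ of $B$ with $\mathrm{rk}\,H_1(N) \leq r$ that lies so close to $B$ outside $U$ that $N \cap \partial U$ deformation-retracts onto $B \cap \partial U$; in particular $N \cap \partial U$ is itself a disjoint union of polyhedral disks.

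Set $M := N \cap U$ and $N' := \overline{N \setminus U}$. Then $M$ is a polyhedral $3$--manifold with $T \subseteq M \subseteq U \subseteq V$, and so is a \emph{pm}--neighbourhood of $T$ inside $V$. Applying the Mayer--Vietoris sequence to the polyhedral decomposition $N = M \cup N'$ (whose intersection is $N \cap \partial U$), and using $H_1(N \cap \partial U) = 0$ from the previous step, we see that the map $H_1(M) \oplus H_1(N') \to H_1(N)$ is injective, whence
\[\mathrm{rk}\,H_1(M) \leq \mathrm{rk}\,H_1(N) \leq r.\]
Letting $V$ run through a neighbourhood basis of $T$ yields $(P_r)$ for $T$, and thus $r(T) \leq r$, as claimed.

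The main obstacle is engineering the geometric setup just described, in particular producing \emph{pm}--neighbourhoods $U$ whose components are $3$--cells in such a way that $B \cap \partial U_i$ is a disjoint union of disks for every $i$. Intuitively one wants the components of $U$ to be small enough that $B$ enters each $U_i$ in a ``tree--like'' (simply branched) fashion, with no nontrivial loops trapped inside $U_i$; the total disconnectedness of $T$ (which permits arbitrarily fine subdivision of $U$) together with the local flatness of $\partial B$ off $T$ (which provides a product structure there via the bicollar) should make this achievable. Should the disk property prove too restrictive to verify directly, an alternative is to apply the subadditivity property (P5) via Proposition \ref{prop:tame} to the decomposition $N = M \cup N'$, obtaining $r(M) \leq r(N) \leq r$; since $M$ itself shrinks to $T$ as $V$ does, this bound on $r(M)$ can then be transferred to the desired bound on $r(T)$.
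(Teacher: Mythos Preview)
Your argument has a genuine gap at exactly the point you yourself flag as ``the main obstacle'': the assertion that, by taking the cells $U_i$ small enough, one can arrange $B \cap \partial U_i$ (and hence $N \cap \partial U_i$) to be a disjoint union of disks. You give no mechanism for this beyond the intuition that $B$ should enter each $U_i$ in a ``tree-like'' fashion. But the wildness of $\partial B$ at points of $T$ means precisely that you have no local model for how $B$ sits near $T$: the curves $\partial B \cap \partial U_i$ can bound planar regions on the $2$--sphere $\partial U_i$ with arbitrarily many boundary components, and nothing in the hypotheses (total disconnectedness of $T$, local flatness off $T$) prevents annular or higher-genus planar pieces from persisting no matter how small you take $U_i$. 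Without this disk property the Mayer--Vietoris step fails, since $H_1(N \cap \partial U)$ need not vanish. Your proposed alternative via (P5) faces the identical obstruction: subadditivity requires $\check{H}^1(M \cap N') = 0$, which is again the disk condition on $N \cap \partial U$, and moreover Proposition~\ref{prop:tame} is tailored to a single separating disk, not a general surface.

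The paper's proof sidesteps this difficulty entirely by working from inside $B$ rather than cutting from outside. Since $\partial B$ is the boundary of a manifold, Brown's theorem gives a collar $c^+ : \partial B \times [0,2] \to B$ regardless of any wildness. One then covers $T$ by small disks $E_j^{(k)} \subseteq \partial B$ whose boundaries avoid $T$, and pushes each $E_j^{(k)}$ into $B$ along the collar to depth $1/k$, producing properly embedded disks $\hat{E}_j^{(k)}$ and product regions $\bar{V}_j^{(k)} \cong E_j^{(k)} \times [0,1/k]$. Because these regions are explicit products, the intersections are honest disks by construction, and the local flatness of $\partial B$ along $\partial E_j^{(k)}$ is exactly what makes each successive decomposition tame in the sense of Definition~\ref{def:tame}. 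Iterated subadditivity then yields $r(B) \geq r(B_1^{(k)})$ with $B_1^{(k)} := \bigcup_j \bar{V}_j^{(k)}$, and since the $B_1^{(k)}$ decrease to $T$, semicontinuity (P3) finishes. The essential idea you are missing is that the collar of $\partial B$ in $B$ provides the controlled product structure that an external system of cells cannot.
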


The proof involves a somewhat delicate geometric construction. In an attempt to convey the intuitive ideas as clearly as possible we have divided the explanation into three stages which we first outline and later on expand in detail.
\medskip

\fbox{Stage 1} Suppose $E \subseteq \partial B$ is a closed disk whose boundary $\partial E$ does not meet $T$, so that $\partial B$ is locally tame at each $p \in \partial E$. We shall show how to:
\begin{itemize}
	\item[(1)] Push $E$ slightly into $B$ while keeping its boundary fixed, obtaining a disk $\hat{E}$ \emph{properly embedded} in $B$. This means that the interior of $E$ is contained in the interior of $B$ and the boundary of $E$ is contained in the boundary of $B$.
	\item[(2)] The disk $\hat{E}$ separates $B$ into two connected components $V_0$ and $V_1$ such that $B = \bar{V}_0 \cup \bar{V}_1$ and $\bar{V}_0 \cap \bar{V}_1 = \hat{E}$. We choose the labeling in such a way that $V_1$ is the component bounded by $E$ and $\hat{E}$.
	\item[(3)] The decomposition $B = \bar{V}_0 \cup \bar{V}_1$ is tame and the condition $\check{H}^1(\bar{V}_0 \cap \bar{V}_1) = 0$ needed to apply the subadditivity property (P5) of $r$ is met. Hence $r(B) \geq r(\bar{V}_0) + r(\bar{V}_1)$.
\end{itemize}

Part (1) relies heavily on the fact that $\partial B$ is the boundary of $B$ and this provides a natural way of pushing $E$ into $B$. It is the ultimate reason why in Theorem \ref{teo:main2} we need to require that $S$ bounds a $3$--manifold on one side (that would be $B$, and then $S = \partial B$). The assumption that $\partial E$ is disjoint from $T$ guarantees that $\partial B$ is locally flat at each $p \in \partial E$ and is used crucially in (3) to prove that the decomposition $B = \bar{V}_0 \cup \bar{V}_1$ is tame.
\medskip

\fbox{Stage 2} This is essentially the same as before, but now instead of a single disk $E \subseteq \partial B$ we consider a finite number of disjoint closed disks $E_1,\ldots,E_n \subseteq \partial B$ such that none of their boundaries $\partial E_j$ meets $T$. In Figure \ref{fig:split_disk}.(a) the set $T$ is schematically shown as a collection of thick points in $\partial B$ and the disks $E_j$ as three disjoint intervals. In our actual application $T$ will be contained in the union of the disks $E_j$, and that is how it is shown in Figure \ref{fig:split_disk}. As before, one can:
\begin{itemize}
	\item[(1)] Push the disks $E_j$ slightly into $B$ while keeping their boundaries fixed. This yields a family of disjoint closed disks $\hat{E}_j$ properly embedded in $B$. The $\hat{E}_j$ are shown in Figure \ref{fig:split_disk}.(b).
	\item[(2)] The (union of the) family of disks $\hat{E}_j$ separates $B$ into $(n+1)$ connected components: a ``big'' one $V_0$ and ``smaller'' ones $V_1,V_2,\ldots,V_n$. The latter are the ones bounded by each $E_j$ and its corresponding $\hat{E}_j$ (see Figure \ref{fig:split_disk}.(c)). As before, $B$ is the union of the closures $\bar{V}_j$ of the $V_j$. Also, $\bar{V}_1, \ldots, \bar{V}_n$ are pairwise disjoint and each of them intersects $\bar{V}_0$ in its corresponding disk $\hat{E}_j$.
	\item[(3)] An argument involving the subadditivity property of $r$ proves that the inequality \[r(B) \geq \sum_{j=0}^n r(\bar{V}_j)\] holds.
\end{itemize}

\begin{figure}[h]
\subfigure[The disks $E_j$]{
\begin{pspicture}(0,0)(8,5)
	\rput[bl](0.25,0.25){\scalebox{0.75}{\includegraphics{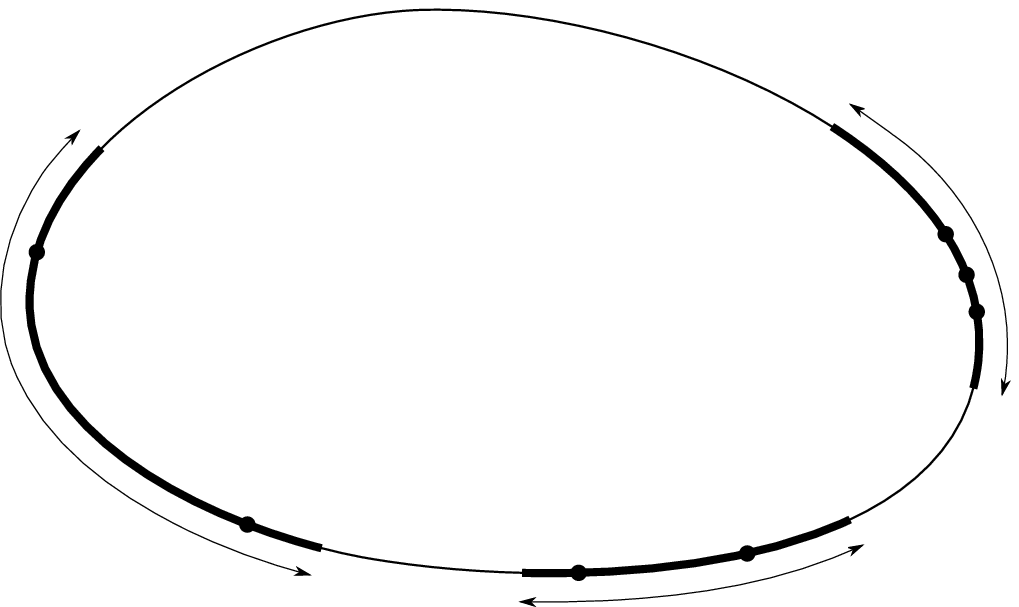}}}
	\rput(1.4,4.6){$\partial B$} \rput(3.8,3.8){$B$}
	\rput(0.6,1.2){$E_1$} \rput(6,0.1){$E_2$} \rput(7.55,3.8){$E_3$}
\end{pspicture}}
\subfigure[The disks $\hat{E}_j$]{
\begin{pspicture}(0,0)(8,5)
	\rput[bl](0.5,0.5){\scalebox{0.75}{\includegraphics{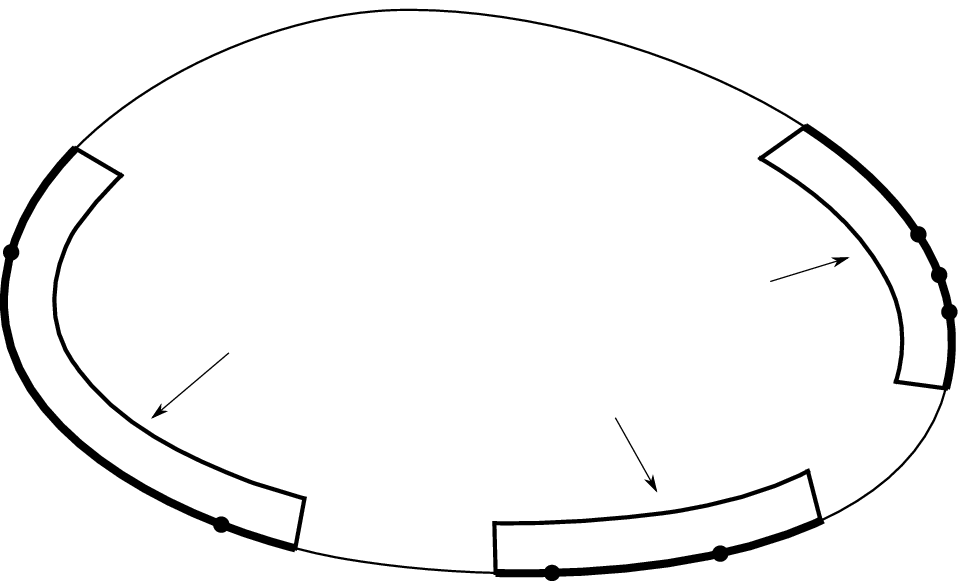}}}
	\rput[bl](2.3,2.3){$\hat{E}_1$} \rput[br](5.1,1.8){$\hat{E}_2$} \rput[tr](6.45,2.8){$\hat{E}_3$}
\end{pspicture}}
\subfigure[The components $V_j$]{
\begin{pspicture}(0,0)(8,6)
	\rput[bl](0.5,0.5){\scalebox{0.75}{\includegraphics{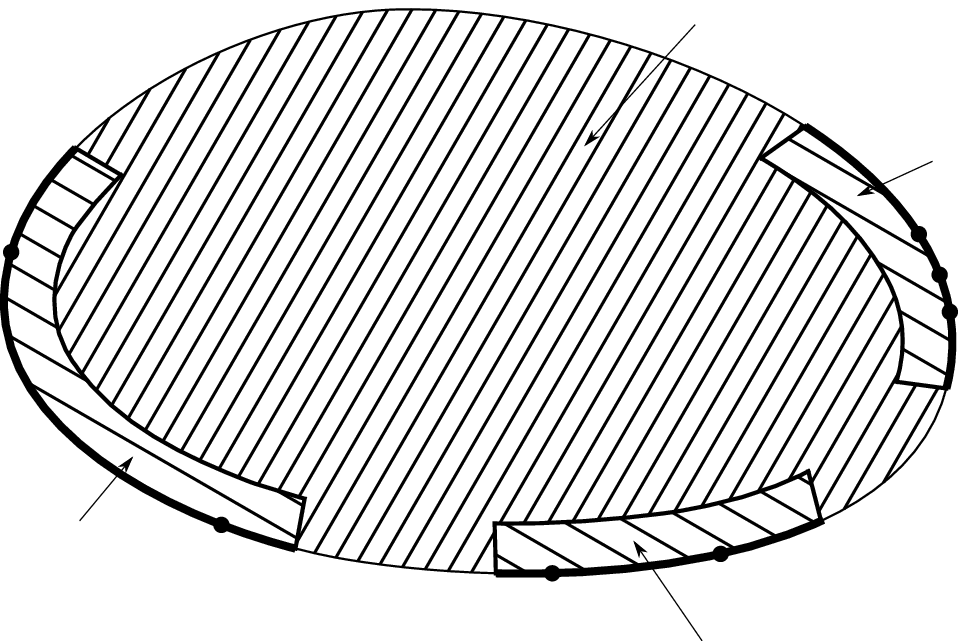}}}
	\rput[tr](1.1,1.4){$V_1$} \rput[tl](5.9,0.5){$V_2$} \rput[bl](5.9,5.2){$V_0$} \rput[bl](7.7,4.2){$V_3$}
\end{pspicture}}
\subfigure[Two stages ($k=1,2$) of the construction]{
\begin{pspicture}(-0.2,0)(8,6)
	\rput[bl](0.5,0.5){\scalebox{0.75}{\includegraphics{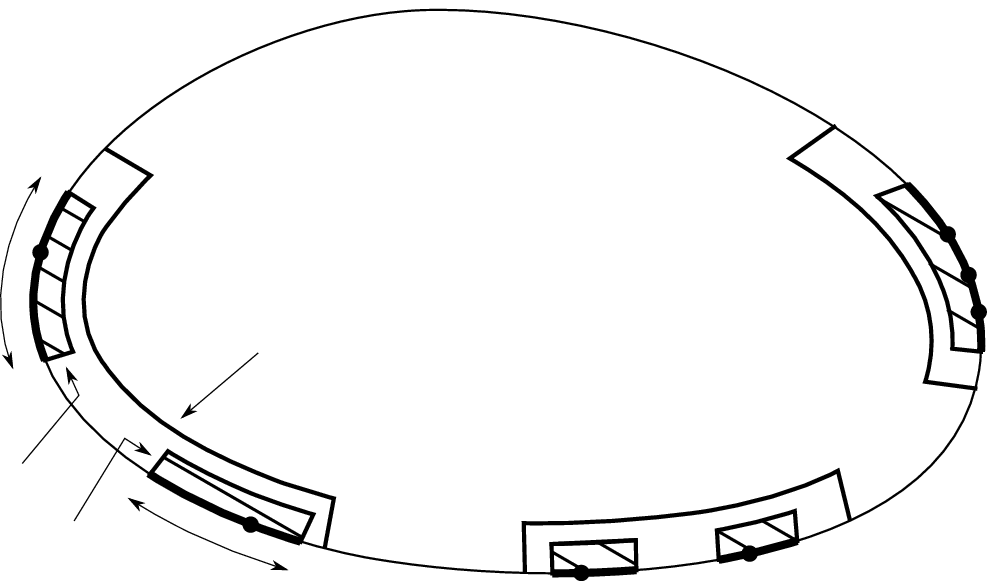}}}
	\rput[bl](2.5,2.2){$\hat{E}^{(1)}_1$}
	\rput[bl](-0.2,2.7){$E_1^{(2)}$} \rput[bl](1.6,0.2){$E_2^{(2)}$}
	\rput[bl](0.1,0.8){$\hat{E}_1^{(2)}$} \rput[bl](0.7,0.4){$\hat{E}_2^{(2)}$}
\end{pspicture}}
\caption{\label{fig:split_disk}}
\end{figure}
\medskip

\fbox{Stage 3} Finally, instead of performing the above construction just once, we perform it for each $k = 1,2,\ldots$ as follows. At each step $k$, we cover $T$ with (the interiors of) a finite family of disks $E_1^{(k)},E_2^{(k)},\ldots,E_{n_k}^{(k)}$ which now depend on $k$. These disks should be chosen in such a way that each $E_j^{(k+1)}$ is contained in an $E_{j'}^{(k)}$ and their diameters tend to zero as $k$ increases. This is always possible because $T$ is a totally disconnected set. Then, paralleling the previous stages:
\begin{itemize}
	\item[(1)] Push each disk $E_j^{(k)}$ slightly into $B$ while keeping its boundary fixed, taking the precaution to push it only to ``depth'' $1/k$. See Figure \ref{fig:split_disk}.(d).
	\item[(2)] For each $k$ the family of disks $E_1^{(k)},E_2^{(k)},\ldots,E_{n_k}^{(k)}$ separates $B$ into $(n_k+1)$ connected components, of which we denote $V_0^{(k)}$ the ``big'' one and $V_1^{(k)},V_2^{(k)},\ldots,V_{n_k}^{(k)}$ the remaining ones.
	\item[(3)] Consider the closures $\bar{V}_j$ of the $V_j$. As before, the inequality \[r(B) \geq \sum_{j=0}^{n_k} r(\bar{V}_j^{(k)})\] holds for each $k$. For the sake of brevity, set \[B_0^{(k)} := \bar{V}^{(k)}_0 \quad \text{ and } \quad B_1^{(k)} := \bar{V}^{(k)}_1 \cup \ldots \cup \bar{V}_{n_k}^{(k)}.\] Since $\bar{V}_1^{(k)}, \ldots, \bar{V}_{n_k}^{(k)}$ are mutually disjoint, clearly $r(B_1^{(k)})$ coincides with the sum $\sum_{j=1}^{n_k} r(\bar{V}_j^{(k)})$ and therefore the above inequality can be written more compactly as \[r(B) \geq r(B_0^{(k)}) + r(B_1^{(k)}).\]
	\item[(4)] From the above inequality we have $r(B) \geq r(B_1^{(k)})$ for every $k$. Since each $E_j^{(k+1)}$ is contained in some $E_{j'}^{(k)}$ and we have pushed $E_j^{(k+1)}$ into $B$ to depth $1/(k+1)$, which is less than we did with $E_{j'}^{(k)}$, it follows that $V_j^{(k+1)}$ is contained in $V_{j'}^{(k)}$ as suggested by Figure \ref{fig:split_disk}.(d). As a consequence, $B_1^{(k+1)} \subseteq B_1^{(k)}$. Moreover, since the diameters of the $E_j^{(k)}$ and the depths $1/k$ of the $\hat{E}_j^{(k)}$ converge to zero, $T = \bigcap_{k =1}^{\infty} B_1^{(k)}$. Then, appealing to the semicontinuity property (P3) of $r$ and using $r(B) \geq r(B_1^{(k)})$, we get \[r(B) \geq \liminf_{k \rightarrow \infty} r(B_1^{(k)}) \geq r(T)\] which proves Theorem \ref{teo:top}.
\end{itemize}
\smallskip

In the remaining of this section we shall fill in the details of the proof of Theorem \ref{teo:top} along the lines just described. We shall begin with a ``Stage 0'' in order to introduce some notation and constructions that will be useful later on. Also, we shall use without explanation some facts that follow from the $2$--dimensional Sch\"onflies conjecture. It was mentioned in page \pageref{pg:schon} that in three dimensions the Sch\"onflies conjecture is false in general. However, in two dimensions it is true: given any two $1$--spheres in $\mathbb{R}^2$ (that is, any two simple closed curves in the plane) there is an ambient homeomorphism that takes one onto the other (see \cite[Chapter 10, pp. 71 ff.]{moise2}). As a consequence, given any two closed disks in the plane there is always an ambient homeomorphism that sends one onto the other. In this form, the Sch\"onflies theorem generalizes to any connected surface: given any two closed disks $E$ and $E'$ in a closed connected surface $S$, there is a homeomorphism of the surface that sends one onto the other. In particular, let $E' := \varphi^{-1}(\mathbb{D}^2)$ where $\varphi : U \subseteq S \longrightarrow \mathbb{R}^2$ is some chart of $S$ and $\mathbb{D}^2$ is the closed unit disk in $\mathbb{R}^2$. By the Sch\"onflies theorem, there is a homeomorphism $g$ of $S$ that sends $E$ onto $E'$. The composition $\varphi \circ g$ is defined on the neighbourhood $g^{-1}(U)$ of $E$ and sends $E$ onto $\mathbb{D}^2 \subseteq \mathbb{R}^2$, allowing us to pull back any construction performed with $\mathbb{D}^2$ to a construction performed with $E$. For instance: (i) one can clearly find arbitrarily thin annuli along $\partial \mathbb{D}^2$; pulling back via $\varphi \circ g$, the same is true of $\partial E$; (ii) one can clearly find arbitrarily small neighbourhoods of $\mathbb{D}^2$ that are homeomorphic to $\mathbb{R}^2$ (for instance, open disks with radius slightly bigger than $1$ but arbitrarily close to it), so the same is true of $E$.

\subsection{Stage 0} \label{sec:stage0} Consider the plane $\{z = 0\}$ in $\mathbb{R}^3$ and a closed disk $D \subseteq \{z = 0\}$. Denote $P := \mathbb{R}^2 \times [-2,2]$ and $P^+ := \mathbb{R}^2 \times [0,2]$. Set \begin{equation} \label{eq:def_D} \hat{D} := \partial D \times [0,1] \cup D \times 1.\end{equation} (To avoid ambiguities in interpreting this and subsequent expressions we adhere to the convention that $\partial$ has precedence over $\times$, which in turn has precedence over $\cup$ and $-$). See Figure \ref{fig:Dhat}. Clearly $\hat{D}$ is a $2$--disk whose boundary coincides with $\partial D$ and whose interior is entirely contained in the upper halfspace $z > 0$; that is, $\hat{D}$ is properly embedded in $P^+$. We say that $\hat{D}$ has been obtained by \emph{pushing $D$ into $P^+$ to depth one}. Notice that $\hat{D}$ separates $P^+$ into two connected components \begin{equation} \label{eq:def_U} U_0 := \mathbb{R}^2 \times (1,2] \cup (\mathbb{R}^2 - D) \times [0,2] \quad \text{and} \quad U_1 := \dot{D} \times [0,1)\end{equation} whose intersection is precisely the disk $\hat{D}$. Here (and in the sequel) a dot over a manifold will denote the interior of the manifold (that is, the manifold minus its boundary). That $\hat{D}$ separates $P^+$ as described should be clear intuitively and can be confirmed by Figure \ref{fig:Dhat}.

\begin{figure}[h]
\begin{pspicture}(-1.5,-0.5)(11.5,4)
	\rput[bl](0,0){\scalebox{0.75}{\includegraphics{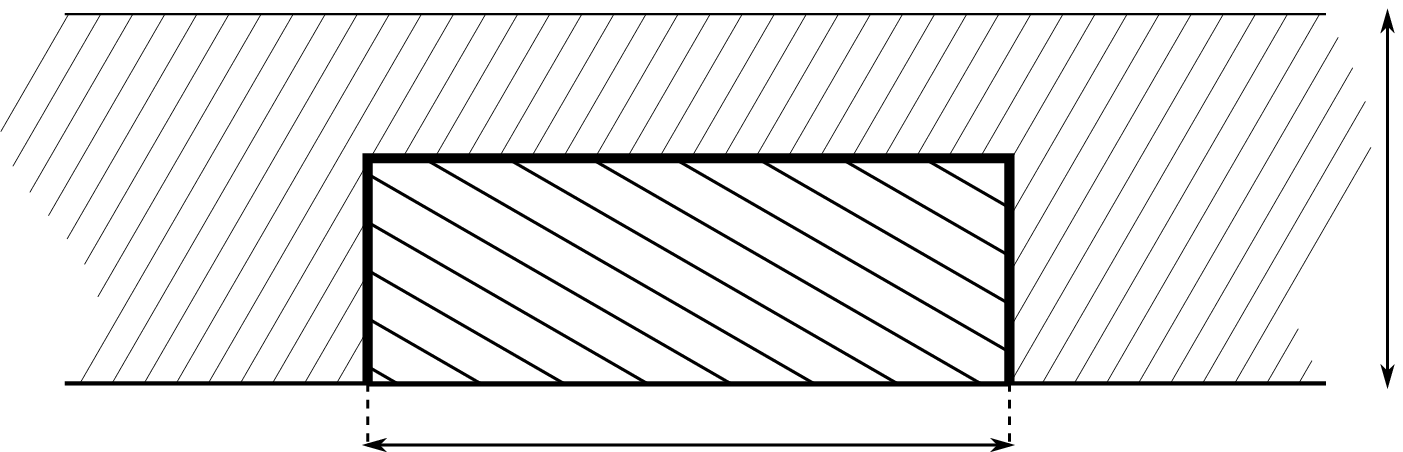}}}
	\rput[r](0,0.6){$z = 0$}
	\rput[r](0,3.4){$z = 2$}
	\rput[l](11,2){$P^+$}
	\rput(9,2.8){$U_0$} \rput(6,1.5){$U_1$}
	\rput(5.25,-0.25){$D$} \rput(2.5,2.5){$\hat{D}$}
\end{pspicture}
\caption{The construction of $\hat{D}$ \label{fig:Dhat}}
\end{figure}

Although it does not really make sense to say that the decomposition $P^+ = U_0 \cup U_1$ is tame because $P^+$ is not compact, the way in which the sets $U_0$ and $U_1$ intersect along $\hat{D}$ is certainly characteristic of tame decompositions. This is the motivation behind the following proposition, where we make use again of the sets $Q$, $S$, $Q_0$ and $Q_1$ introduced when discussing the subadditivity property, just before Figure \ref{fig:tameaxes}.

\begin{proposition} \label{prop:step0} There exists an embedding $e : Q \longrightarrow \mathbb{R}^3$ such that:
\begin{enumerate}
	\item[(1)] $\hat{D} \subseteq e(\dot{S})$,
	\item[(2)] $U_0 \cap e(Q) \subseteq e(Q_0)$ and $U_1 \cap e(Q) \subseteq e(Q_1)$.
\end{enumerate}
\end{proposition}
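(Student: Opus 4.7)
The plan is to realise the square $e(S)$ as a slight PL enlargement $\hat{D}'$ of $\hat{D}$ and to build $e(Q)$ as a thin bicollar of $\hat{D}'$ whose two sides correspond to the ``inside'' and ``outside'' of the bowl $\hat{D}$. Concretely, fix a small $\varepsilon > 0$, let $D' \supseteq D$ be a slightly larger concentric closed disk in $\{z = 0\}$, and set
\[
\hat{D}' := \hat{D} \,\cup\, \overline{D' \setminus \dot{D}} \times \{0\}.
\]
This attaches to $\hat{D}$ a thin annular ``brim'' lying flat in $\{z = 0\}$, glued along the boundary circle $\partial D \times \{0\}$; the result is a piecewise linear $2$--disk in $\mathbb{R}^3$ whose boundary is $\partial D' \times \{0\}$ and whose relative interior contains $\hat{D}$. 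Being piecewise linear, $\hat{D}'$ is locally flat in $\mathbb{R}^3$, so it admits a bicollar: an embedding $b : \hat{D}' \times [-1, 1] \longrightarrow \mathbb{R}^3$ with $b(p, 0) = p$ for every $p \in \hat{D}'$.

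The heart of the argument is to choose the two sides of $b$ so that they match the decomposition $P^+ = U_0 \cup \hat{D} \cup U_1$. For a sufficiently small thickness $\delta > 0$, arrange $b$ so that the \emph{inside} sheet $b(\hat{D}' \times [-\delta, 0))$ points downward on the top face $D \times \{1\}$, radially inward on the cylindrical wall $\partial D \times [0, 1]$, and downward (into $\{z < 0\}$) on the brim; the \emph{outside} sheet $b(\hat{D}' \times (0, \delta])$ does the opposite. These three local prescriptions fit together because, as one crosses either of the two circular corners of $\hat{D}'$, the inside direction remains on the concave side of the surface. A direct computation then gives
\[
b\bigl(\hat{D}' \times [-\delta, 0)\bigr) \cap U_0 = \varnothing \qquad \text{and} \qquad b\bigl(\hat{D}' \times (0, \delta]\bigr) \cap U_1 = \varnothing,
\]
because on each face of $\hat{D}'$ the inside sheet lands either in $U_1$ or in the neutral regions $\{z < 0\}$ and $\partial D \times [0, 1]$, all disjoint from $U_0$; the symmetric verification handles the outside sheet.

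To finish, fix a homeomorphism $\psi : Q \longrightarrow \hat{D}' \times [-\delta, \delta]$ sending $S$ to $\hat{D}' \times \{0\}$, $Q_0$ to $\hat{D}' \times [0, \delta]$ and $Q_1$ to $\hat{D}' \times [-\delta, 0]$, and set $e := b \circ \psi$. Condition (1) follows at once, since $e(\dot{S})$ is the relative interior of $\hat{D}'$, which contains $\hat{D}$ by construction. For condition (2), $e(Q_1) \setminus e(S)$ is exactly the open inside sheet $b(\hat{D}' \times [-\delta, 0))$ and is therefore disjoint from $U_0$, so every point of $U_0 \cap e(Q)$ must lie in $e(S) \cup e(Q_0) = e(Q_0)$; the inclusion $U_1 \cap e(Q) \subseteq e(Q_1)$ is symmetric.

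The main difficulty is the geometric bookkeeping at the two circular corners of $\hat{D}'$: a naive product thickening of each face separately fails to be an embedding near the corners, so one must check that the three local thickenings can be glued into a single genuine bicollar whose two sides remain coherently defined. This is essentially the content of the bicollar theorem for locally flat $2$--disks in $\mathbb{R}^3$, applied to $\hat{D}'$.
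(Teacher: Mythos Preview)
Your argument is correct and reaches the same conclusion as the paper, but the route is genuinely different. The paper enlarges $\hat{D}$ \emph{downwards}: it sets $S' := \partial D \times [-1,1] \cup D \times \{1\}$, so that the extra piece is a cylindrical collar of $\partial D$ dipping into $\{z<0\}$, and it writes down the image cube explicitly as the ``thick bucket'' $Q' = A \times [-1,\tfrac32] \cup D \times [\tfrac12,\tfrac32]$ (with $A$ an annulus along $\partial D$). It then exhibits $e:Q\to Q'$ by hand in the special case where $D$ is a square, and reduces the general case to this one via the planar Sch\"onflies theorem. By contrast, you enlarge $\hat{D}$ \emph{outwards} with a flat brim at height $0$ and then appeal to the bicollar theorem for locally flat (here PL) $2$--disks to produce $e(Q)$ as a thin product neighbourhood of $\hat{D}'$.

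What each buys: the paper's approach is entirely explicit and self-contained (no black-box theorem is invoked; the corner difficulty never arises because $Q'$ is written down directly), and it yields the concrete formula for $Q'$ that the paper later reuses in Remark~\ref{rem:Qprima}. Your approach is shorter and more conceptual once one grants PL regular-neighbourhood theory, and it makes transparent why conditions (1) and (2) hold: they are just the statement that the two sheets of a bicollar of $\hat{D}'$ lie on the two sides of $\hat{D}$. The one place where you are a bit quick is the ``arrange $b$ so that\ldots'' step: the bicollar theorem only gives \emph{some} bicollar, and you need the two sheets to fall on the correct sides globally. This is fine because $\hat{D}'$ is simply connected (so the two local sides are globally well defined) and a direct continuity check across the two corner circles shows the side that is ``below'' on the top face is ``radially inward'' on the wall and ``below'' on the brim, exactly as you assert.
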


Figure \ref{fig:step0} below illustrates the content of Proposition \ref{prop:step0} in a two dimensional (very schematic) picture. We have denoted $S' = e(S)$ and $Q' = e(Q)$. $S'$ separates $Q'$ into two connected components whose closures are precisely $e(Q_0)$ and $e(Q_1)$.

\begin{figure}[h]
\begin{pspicture}(-1.5,-0.5)(11.5,6)
	\rput[bl](0,0){\scalebox{0.75}{\includegraphics{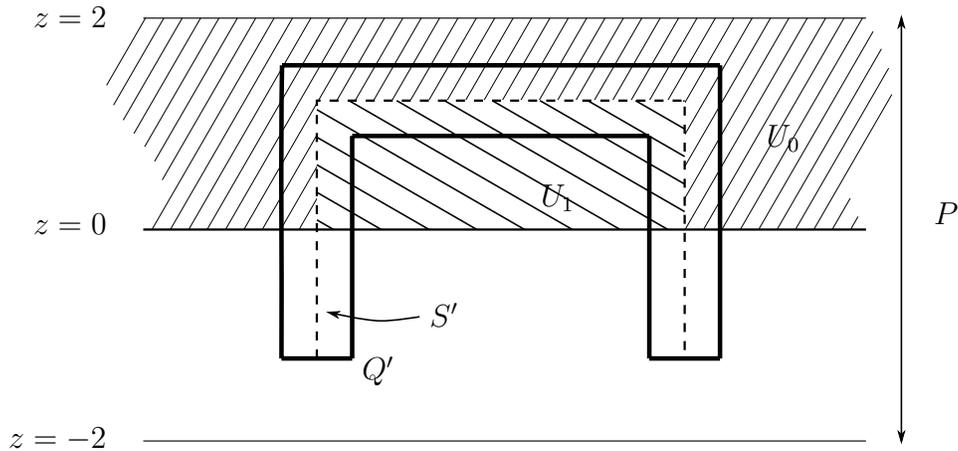}}}
	\rput[r](0,0.1){$z = -2$}
	\rput[r](0,2.95){$z = 0$}
	\rput[r](0,5.7){$z = 2$}
	\rput[l](11,3.1){$P$}
	\rput(9,4.1){$U_0$} \rput(6,3.3){$U_1$}
	\rput[l](3.4,1){$Q'$} \rput[l](4.3,1.75){$S'$}
\end{pspicture}
\caption{Scheme for Proposition \ref{prop:step0} \label{fig:step0}}
\end{figure}

\begin{proof} Let $A$ be a closed annulus along the boundary of $D$. Consider the sets \[S' := \partial D \times [-1,1] \cup D \times 1 \quad \text{ and } \quad Q' := A \times [-1,\nicefrac{3}{2}] \cup D \times [\nicefrac{1}{2},\nicefrac{3}{2}].\]

$Q'$ resembles a ``thick bucket'' turned upside down. Its intersection with the plane $\{z=0\}$ is precisely the annulus $A$. $S'$ is a $2$--disk properly embedded in $Q'$. The intersection of $S'$ with the plane $\{z=0\}$ is precisely the boundary of the disk $D$. The disk $\hat{D}$ is the part of $S'$ that lies above the $\{z=0\}$ plane; that is, the intersection $S' \cap \{z \geq 0\}$. In particular $\hat{D}$ is contained in the interior of $S'$.
\smallskip

{\it Simple case.} First consider the particular case when $D$ is a square and $A$ is the annulus comprised between two homothetic copies of $D$, one slightly bigger and the other slightly smaller than $D$ itself. Figure \ref{fig:prime} shows how $Q'$ and $S'$ would look like in this case. $S'$ separates $Q'$ into two connected components, the closures of which we denote $Q'_0$ and $Q'_1$. Specifically, we denote $Q'_0$ the closure of the ``outer'' component and $Q'_1$ the closure of the ``inner'' one.

\begin{figure}[h]
\null\hfill
\subfigure[The polyhedron $Q'$]{
	\begin{pspicture}(-1,-0.2)(7.2,7)
	\rput[bl](0,0){\scalebox{0.75}{\includegraphics{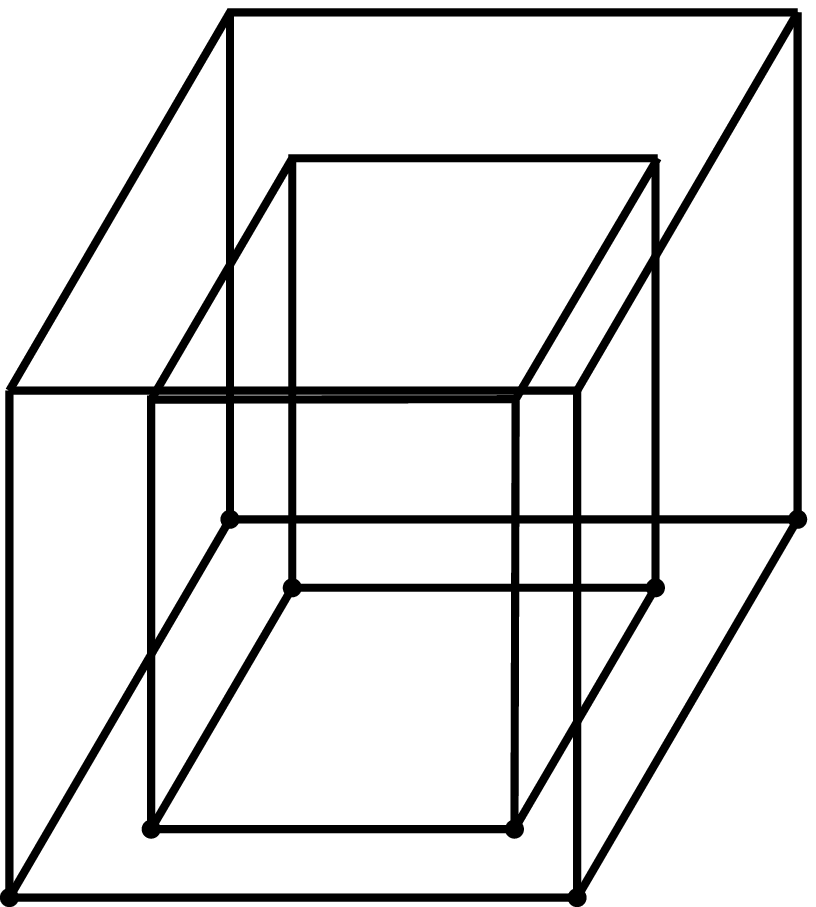}}}
	\rput(0.8,6.2){$Q'$}
	\rput[r](-0.1,0){$A_1$} \rput[l](6.2,3){$A_3$} \rput[l](4.5,0){$A_2$}
	\rput[bl](1.45,0.7){$B_i$}	
	\end{pspicture}}
\hfill
\subfigure[The disk $S'$ contained in $Q'$]{
\begin{pspicture}(0,-0.2)(6.2,7)
	\rput[bl](0,0){\scalebox{0.75}{\includegraphics{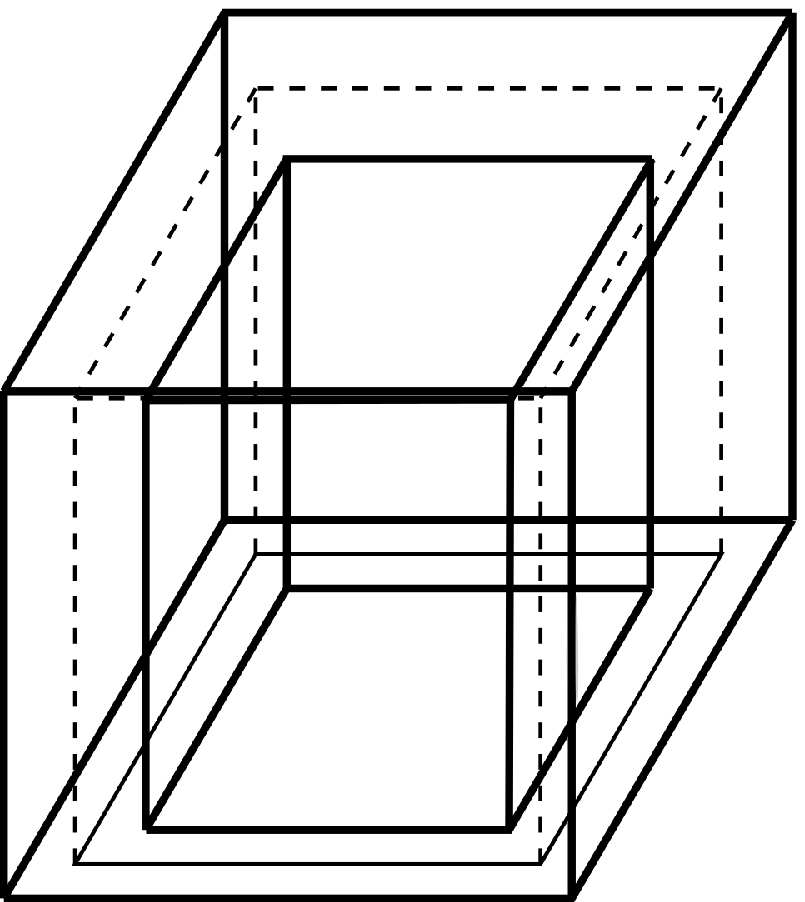}}}
	\rput[br](0.8,6.4){$S'$} \psline{->}(0.8,6.4)(1.3,5.5)
	\end{pspicture}}
\hfill\null
\caption{\label{fig:prime}}
\end{figure}

It is easy to see that there exists a homeomorphism $e$ from $Q$ onto $Q'$ that sends $S$ onto $S'$. A partial description of $e$ will hopefully be enough. Label the vertices of the bottom, annular face of $Q'$ with the letters $A_1, \ldots, A_4$ and $B_1,\ldots,B_4$ as suggested in Figure \ref{fig:prime}.(a) (not all the labels are shown in the drawing to avoid cluttering). Then, referring to Figures \ref{fig:tameaxes} and \ref{fig:prime}.(a), the homeomorphism $e$ may be described by saying that it is piecewise linear and has the following properties:
\begin{itemize}
	\item[({\it i}\/)] The vertices of the left face of $Q$ are mapped onto the $A_i$ while the face itself is mapped onto the outer surface of $Q'$; that is, onto the lateral outer walls and the top outer face.
	\item[({\it ii}\/)] The vertices of the right face of $Q$ are mapped onto the $B_i$; the face itself is mapped onto the inner surface of $Q'$. 
	\item[({\it iii}\/)] A thin annular neighbourhood along $\partial S$ in $\partial Q$ is mapped onto the bottom annular face of $Q'$.
\end{itemize}

Evidently $e$ sends $Q_0$ onto $Q'_0$ and $Q_1$ onto $Q'_1$. Hence $U_0 \cap e(Q) = U_0 \cap Q' \subseteq e(Q_0)$ and similarly $U_1 \cap e(Q) = U_1 \cap Q' \subseteq e(Q_1)$ (the $U_i$ were defined in Equation \eqref{eq:def_U}), showing that $e$ satisfies condition (2). Also, $e(\dot{S})$ is the interior of $e(S) = S'$, so condition (1) is also satisfied because $\hat{D} \subseteq \dot{S}'$ by construction.

\smallskip

{\it General case.} When $D$ is an arbitrary disk we can appeal to the $2$--dimensional Sch\"onflies theorem to reduce the problem to the previous case. Indeed, the Sch\"onflies theorem guarantees that there exists a homeomorphism $s : \{z=0\} \longrightarrow \{z=0\}$ that sends $D$ onto a square $D_0$ for which we already know an embedding $e_0$ exists with the required properties, as constructed in the previous case. Clearly $s$ extends to a homeomorphism $\hat{s}$ of all $\mathbb{R}^3$ simply letting $\hat{s}(x,y,z) := (s(x,y),z)$, and it is then easy to check that the embedding $e := \hat{s} \circ e_0$ satisfies all the required conditions.
\end{proof}

\begin{remark} \label{rem:Qprima} Later on we shall need the explicit form of $Q' = e(Q)$ used in the proof of the proposition. Hence, for future reference, we record it here: \begin{equation} \label{eq:Qprima} Q' := A \times [-1,\nicefrac{3}{2}] \cup D \times [\nicefrac{1}{2},\nicefrac{3}{2}],\end{equation} where $A$ is a closed annulus along the boundary of $D$.
\end{remark}

\subsection{Stage 1} \label{subsec:1} W need to recall some standard definitions (see for instance \cite{brown4}). Suppose $Y$ is a subspace of $X$. We say that $Y$ is \emph{collared} in $X$ if there exists a homeomorphism $c$ from $Y \times [0,1]$ onto a neighbourhood of $Y$ in $X$ and such that $c(p,0) = p$ for every $p \in Y$, and \emph{locally collared} at some point $p \in Y$ if $p$ has a neighbourhood in $Y$ which is collared in $X$. We shall make use of two important theorems of Brown:
\begin{itemize}
	\item[(B1)] If $Y$ is locally collared in $X$ at each $p \in Y$, then $Y$ is collared in $X$ \cite[Theorem 1, p. 337]{brown4}.
	\item[(B2)] As a consequence, the boundary $\partial X$ of a manifold with boundary $X$ is collared in the manifold \cite[Theorem 2, p. 339]{brown4}.
\end{itemize}

Suppose $S$ is a closed surface contained in $\mathbb{R}^3$. A \emph{bicollar} of $S$ is a homeomorphism $b$ from $S \times [-1,1]$ onto a neighbourhood of $S$ in $\mathbb{R}^3$ such that $b(p,0) = p$ for every $p \in S$ (this is a trivial extension of the definition given earlier for $2$--spheres). The existence of a bicollar is equivalent to the existence of a collar of $S$ both in $\bar{U}$ and $\bar{V}$, where $U$ and $V$ are the two connected components into which $S$ separates $\mathbb{R}^3$. When $S$ bounds a $3$--manifold, as with $\partial B$ in our context, then at least one of the complementary domains of $S$ (say $\bar{U}$) is a $3$--manifold whose boundary is precisely $S$, so $S$ is already collared ``on the $U$-side'' and consequently it is bicollared if and only if it is collared also ``on the $V$-side''.

Given a point $p \in S$ on defines the notion of $S$ being \emph{locally bicollared} at $p$ mimicking the definition of locally collared given earlier. It is not difficult to check that if $S$ is locally flat at some point $p$, then it is locally bicollared at that same point (the converse is also true, but we shall not need it).
\smallskip

Let $E$ be a closed disk contained in the boundary of $B$ and such that $\partial E$ is disjoint from $T$ so, in particular, $\partial B$ is locally flat at each point of $\partial E$.

First let us show how to push $E$ into $B$, essentially mimicking the model laid out in Section \ref{sec:stage0}. Since $\partial B$ is collared in $B$ by (B2), there exists an embedding $c^+$ of $\partial B \times [0,2]$ into $B$ such that $c^+(p,0) = p$ for every $p \in \partial B$. (Notice that we take $\partial B \times [0,2]$ rather than $\partial B \times [0,1]$ as the domain of the collar $c^+$; this is just for notational convenience.) Denote $C$ the image of $c^+$, which is a closed neighbourhood of $\partial B$ in $B$. Set \begin{equation} \label{eq:def_E}\hat{E} := c^+(\partial E \times [0,1] \cup E \times 1).\end{equation}  By construction $\hat{E}$ is a $2$--disk properly embedded in $B$; also, $\partial \hat{E} = \partial E$. At this point it may be convenient to refer back to Figure \ref{fig:split_disk}.(b), which shows a two dimensional sketch of the situation. The disk $E$ appears as an interval contained in the boundary of $B$, and $\hat{E}$ can be thought of as the result of pushing $E$ into $B$ along the lines of the collar $c^+$.


\begin{proposition} \label{prop:separates} Suppose $B$ is connected. Then $\hat{E}$ separates $B$ into two connected components $V_0$ and $V_1$ such that $E \cap V_0 = \emptyset$, $\dot{E} \subseteq V_1$ and $\bar{V}_0 \cap \bar{V}_1 = \hat{E}$.
\end{proposition}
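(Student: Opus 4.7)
The natural candidates suggested by the construction are $V_1 := c^+(\dot E \times [0,1))$, the open ``box'' pinched off between $E$ and $\hat E$, and $V_0 := B \setminus (V_1 \cup \hat E)$. Note that $V_1 \cup \hat E = c^+(E \times [0,1])$ is compact, so $V_0$ is open, and by construction $V_0$, $V_1$, $\hat E$ partition $B$. The inclusions $\dot E = c^+(\dot E \times \{0\}) \subseteq V_1$ and $E \subseteq V_1 \cup \hat E$ (so $E \cap V_0 = \emptyset$) are then immediate. Moreover $V_1$ is open in $B$ (a point $c^+(p,0)$ with $p \in \dot E$ has a collar half-ball neighbourhood $c^+(U \times [0,\varepsilon))$ in $B$, for $U \subseteq \dot E$ a neighbourhood of $p$ in $\partial B$) and connected, being the homeomorphic image of the product $\dot E \times [0,1)$.

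The heart of the matter is the connectedness of $V_0$. The plan is to first show that $\bar V_0 = B \setminus V_1$ is a compact connected 3-manifold, and then to deduce connectedness of $V_0$ from this. For the first assertion, the key observation is that $V_1$ is an open 3-ball whose closure $\bar V_1 = c^+(E \times [0,1])$ is a tame 3-cell in $B$ meeting $\partial B$ precisely along $E$; thus $\bar V_0$ is what one obtains by ``pushing $E$ slightly in along the collar'', and the self-homeomorphism of $\partial B \times [0,2]$ that linearly rescales the $[0,2]$ coordinate fiberwise over $E$ produces, via $c^+$, an ambient homeomorphism of $B$ onto $\bar V_0$. In particular $\bar V_0$ is connected, since $B$ is. To pass from $\bar V_0$ to $V_0 = \bar V_0 \setminus \hat E$, I use that $\hat E$ lies in $\partial \bar V_0$: the manifold interior $\mathrm{int}\,\bar V_0$ is contained in $V_0$ and is connected (it is open and dense in the connected manifold $\bar V_0$), while every point of $\partial \bar V_0 \setminus \hat E$ is a limit of points in $\mathrm{int}\,\bar V_0$ via the collar of $\partial \bar V_0$. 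Hence $V_0$ is a union of the connected set $\mathrm{int}\,\bar V_0$ together with points in its closure, and so is connected.

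Finally, $\bar V_1 = c^+(E \times [0,1])$ (the closure in $B$ of $c^+(\dot E \times [0,1))$) and, as established above, $\bar V_0 = B \setminus V_1$, so
\[
\bar V_0 \cap \bar V_1 \;=\; c^+(E \times [0,1]) \setminus c^+(\dot E \times [0,1)) \;=\; c^+\!\bigl(\partial E \times [0,1] \,\cup\, E \times \{1\}\bigr) \;=\; \hat E,
\]
which closes the argument. The main obstacle is the middle step: one must recognize that the ``box'' $c^+(E \times [0,1])$ is a tame 3-cell in $B$ whose removal leaves a manifold homeomorphic to $B$, and that deleting a closed disk from the boundary of a connected 3-manifold preserves connectedness. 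Both facts are standard once phrased in terms of the collar $c^+$, which is exactly the structure provided by Brown's theorems (B1) and (B2).
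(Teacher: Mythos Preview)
Your definitions of $V_0$ and $V_1$ agree with the paper's, and the final computation $\bar V_0 \cap \bar V_1 = \hat E$ is clean. The difficulty is in your argument that $V_0$ is connected, specifically the claimed homeomorphism $B \cong \bar V_0$.

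You assert that ``the self-homeomorphism of $\partial B \times [0,2]$ that linearly rescales the $[0,2]$ coordinate fiberwise over $E$'' yields a homeomorphism of $B$ onto $\bar V_0$. But a fibre-preserving map of this kind cannot do the job: over a point $p\in \dot E$ the fibre $\{p\}\times[0,2]$ must be sent onto $\{p\}\times[1,2]$, whereas over $p\in\partial E$ (and over $\partial B\setminus E$) the fibre must be sent onto all of $\{p\}\times[0,2]$. These two requirements are incompatible with continuity as $p$ crosses $\partial E$. Equivalently, the intersection $\bar V_0 \cap c^+(E\times[0,2])$ equals $c^+(\partial E\times[0,2]\,\cup\,E\times[1,2])$, which is a $3$--ball with a two-dimensional annular flange attached---certainly not the image of $E\times[0,2]$ under any fibrewise rescaling. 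The statement $\bar V_0 \cong B$ is in fact true (the ball $\bar V_1=c^+(E\times[0,1])$ meets $\partial B$ in a disk and has a locally flat complementary boundary disk $\hat E$, so its removal does not change the homeomorphism type), but justifying it properly requires a non-fibrewise isotopy near $\partial E$, not the map you describe.

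The paper avoids all of this by writing $V_0$ directly as
\[
V_0=(B\setminus C/2)\;\cup\;c^+\bigl((\partial B\setminus E)\times[0,2]\bigr),
\]
where $C/2=c^+(\partial B\times[0,1])$. The first piece is homeomorphic to the (connected) interior of $B$, the second is connected because $\partial B\setminus E$ is (removing a disk does not disconnect a closed connected surface), and the two pieces overlap. This gives connectedness of $V_0$ in one line and sidesteps any question about the homeomorphism type of $\bar V_0$.
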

\begin{proof} Let $C/2$ denote ``half the collar $C$''; that is, $C/2 := c^+(\partial B \times [0,1])$. Consider the sets \[V_0 := (B - C/2) \cup c^+((\partial B - E) \times [0,2]) \quad \text{and} \quad V_1 := c^+(\dot{E} \times [0,1)).\] Both are open in $B$, and both are connected: $V_1$ clearly so, and $V_0$ because it is the union of two connected sets having a nonempty intersection. Indeed, $B - C/2$ is homeomorphic to the interior of $B$, which is connected because $B$ is connected (in $\mathbb{R}^3$, the assumption that $\partial B$ is connected entails that $B$ is connected too), and $\partial B - E$ is connected because removing a disk from a (by assumption) connected surface does not disconnect the surface. Also, $V_0$ and $V_1$ are disjoint and their union is $B - \hat{E}$, so they are actually the two connected components into which $\hat{E}$ separates $B$. A straightforward computation shows that $\bar{V}_0 = V_0 \cup \hat{E}$ and $\bar{V}_1 = V_1 \cup \hat{E}$, so $\bar{V}_0 \cap \bar{V}_1 = \hat{E}$. By construction clearly $V_0$ does not meet $E$ whereas $\dot{E} \subseteq V_1$.
\end{proof}

Now we want to show that the decomposition $B = \bar{V}_0 \cup \bar{V}_1$ is tame. To this end we need to construct some sort of ``partial bicollar'' of $E$ in $\mathbb{R}^3$ as described below.

The ``inner half'' of the bicollar (that is, the part contained in $B$) presents no difficulty since $\partial B$ itself (hence $E$) is collared in $B$. In fact, we already made use of it when constructing the disk $\hat{E}$. We therefore keep our previous notation: $c^+ : \partial B \times [0,2] \longrightarrow B$ is an embedding such that $c^+(p,0) = p$ for every $p \in \partial B$.

Denote $B'$ the complement (in $\mathbb{R}^3$) of the interior of $B$; that is, $B' := \mathbb{R}^3 - \dot{B}$. The ``outer half'' of the bicollar (the part contained in $B'$) can be constructed along $\partial E$, since it contains no wild points of $\partial B$ by assumption. More precisely, we proceed as follows. At each $p \in \partial E$ the surface $\partial B$ is locally flat, hence locally bicollared and in particular locally collared in $B'$. Thus, every $p \in \partial E$ has a neighbourhood $W_p$ in $\partial B$ over which it is possible to find a local collar $c^-_p : W_p \times [-1,0] \longrightarrow B'$. Since $\partial E$ is compact, we may cover it with a finite family of these $W_p$ and paste all the collars together using the result of Brown cited above (B1) to obtain a collar $c^{-} : W \times [-1,0] \longrightarrow B'$, where $W$ is a neighbourhood of $\partial E$ in $\partial B$.

Notice that $\partial E$ is a simple closed curve and so it has arbitrarily close neighbourhoods $A$ that are closed annuli along the curve $\partial E$. Finding such an $A$ so thin that it is contained in $W$ we can assume that the collar $c^{-}$ is defined on all of $A$. The union $b := c^+ \cup c^-$ thus provides an embedding \[b : A \times [-1,2] \cup E \times [0,2] \longrightarrow \mathbb{R}^3.\]

\begin{proposition} \label{prop:tame_dec} The decomposition $B = \bar{V}_0 \cup \bar{V}_1$ is tame. Thus, the inequality $r(B) \geq r(\bar{V}_0) + r(\bar{V}_1)$ holds.
\end{proposition}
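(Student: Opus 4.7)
The plan is to verify the hypotheses of Proposition \ref{prop:tame} and then apply the subadditivity property (P5). The construction transports the local model of Proposition \ref{prop:step0} into the geometry of $B$ via the partial bicollar $b$ already built.

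First, choose the annulus $A \subseteq \partial B$ so that it straddles $\partial E$ (with roughly half on each side of that curve) and is thin enough to be disjoint from $T$; such an $A$ exists because $T$ is closed and $\partial B$ is locally flat along the compact curve $\partial E$. Applying Proposition \ref{prop:step0} in these bicollar coordinates, with $E$ playing the role of $D$, yields a model embedding $e_0 : Q \to A \times [-1,\nicefrac{3}{2}] \cup E \times [\nicefrac{1}{2},\nicefrac{3}{2}]$; the image of $e_0$ lies inside the domain of $b$ by Remark \ref{rem:Qprima}. Set $e := b \circ e_0$.

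Now I would verify condition (1) of Proposition \ref{prop:tame}. The explicit formula $e_0(S) = \partial E \times [-1,1] \cup E \times 1$ combined with the fact that $b$ sends $t \ge 0$ into $B$ and $t < 0$ into $B'$ gives $B \cap e(S) = \hat{E} = \bar{V}_0 \cap \bar{V}_1$ (the second equality by Proposition \ref{prop:separates}); and $\hat{E} \subseteq e(\dot{S})$ because the boundary of $e(S)$ as a disk is $b(\partial E \times \{-1\}) \subseteq B'$, which is disjoint from $\hat{E} \subseteq B$. For condition (2), Proposition \ref{prop:step0} gives the open inclusions $V_i \cap e(Q) \subseteq e(Q_i)$; these upgrade to the closed version $\bar{V}_i \cap e(Q) \subseteq e(Q_i)$ upon writing $\bar{V}_i = V_i \cup \hat{E}$ and noting that $\hat{E} \subseteq e(S) = e(Q_0) \cap e(Q_1) \subseteq e(Q_i)$.

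Proposition \ref{prop:tame} then produces the tameness of the decomposition $B = \bar{V}_0 \cup \bar{V}_1$, and the subadditivity property (P5) yields $r(B) \geq r(\bar{V}_0) + r(\bar{V}_1)$ since $\bar{V}_0 \cap \bar{V}_1 = \hat{E}$ is a closed disk and hence $\check{H}^1(\hat{E}) = 0$. The main obstacle is condition (2) in its closed form: the straddling choice of $A$ is essential so that the closed model sets line up correctly with the $\bar{V}_i$ under $b$, and the passage from the open inclusions supplied by Proposition \ref{prop:step0} to the closed ones needed by Proposition \ref{prop:tame} hinges on the inclusion $\hat{E} \subseteq e(S)$ already established in condition (1).
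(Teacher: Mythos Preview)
Your proof is correct and follows essentially the same route as the paper: both transport the model embedding of Proposition \ref{prop:step0} into $\mathbb{R}^3$ via the partial bicollar $b$ and then invoke Proposition \ref{prop:tame} and (P5). The paper makes the planar identification explicit by introducing a Sch\"onflies homeomorphism $g_0$ from $\{z=0\}$ onto a neighbourhood of $E \cup A$ in $\partial B$ and then setting $g(x,y,z) := b(g_0(x,y),z)$, whereas you fold this step into the phrase ``in these bicollar coordinates''; conversely, you spell out the passage from the open inclusions $V_i \cap e(Q) \subseteq e(Q_i)$ to the closed ones $\bar{V}_i \cap e(Q) \subseteq e(Q_i)$ via $\bar{V}_i = V_i \cup \hat{E}$ and $\hat{E} \subseteq e(S)$, a step the paper leaves implicit.
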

\begin{proof} We are going to use the criterion of Proposition \ref{prop:tame} (thus $Q$, $S$, $Q_0$ and $Q_1$ have the meaning given there).

By the two dimensional Sch\"onflies theorem, there exists a homeomorphism $g_0$ of the plane $\{z = 0\}$ onto an open neighbourhood of $E \cup A$ in $\partial B$. Let $D$ and $A_0$ be the pullbacks of $E$ and $A$ via $g_0$; that is, $D := g_0^{-1}(E)$ and $A_0 := g_0^{-1}(A)$. These are a closed disk in $\{z=0\}$ and an annulus along its boundary, respectively.

As in Stage 0, push $D$ into $P^+$ to depth $1$ obtaining $\hat{D} := \partial D \times [0,1] \cup D \times 1$. Also, consider the cube $Q' := A_0 \times [-1,2] \cup D \times [0,2]$ (compare with Equation \eqref{eq:Qprima}). The map $g_0$ can be extended to an embedding $g$ of all of $Q'$ into $\mathbb{R}^3$ using the partial bicollar $b$, letting $g(x,y,z) := b(g_0(x,y),z)$. Notice that, by construction:
\begin{itemize}
	\item[({\it i}\/)] $g$ takes $\hat{D}$ onto $\hat{E}$,
	\item[({\it ii}\/)] $g(U_0 \cap Q') \subseteq V_0$ and $g(U_1 \cap Q') = V_1$, where $V_0$ and $V_1$ are the connected components of $B - \hat{E}$.
\end{itemize}

According to Proposition \ref{prop:step0} (see also Remark \ref{rem:Qprima}) there exists a homeomorphism $e : Q \longrightarrow Q'$ such that:
\begin{enumerate}
	\item[(1)] $\hat{D} \subseteq e(\dot{S})$,
	\item[(2)] $U_0 \cap e(Q) \subseteq e(Q_0)$ and $U_1 \cap e(Q) \subseteq e(Q_1)$.
\end{enumerate}

Consider the embedding $e' := g \circ e : Q \longrightarrow \mathbb{R}^3$. Applying $g$ to both sides of (1) and using ({\it i}\/) we see that $\hat{E} \subseteq e'(\dot{S})$. Doing the same with (2) and ({\it ii}\/) it follows that $V_0 \cap e'(Q) \subseteq e'(Q_0)$ and $V_1 \cap e'(Q) \subseteq e'(Q_1)$. This establishes (1) and (2) of Proposition \ref{prop:tame}, proving that the decomposition $B = \bar{V}_0 \cup \bar{V}_1$ is tame.


\end{proof}

\begin{remark} \label{rem:depth} Notice that in constructing $\hat{E}$ there is nothing special in pushing $E$ into $B$ to ``depth $1$''; that is, we could set \[\hat{E} := c^+(\partial D \times [0,\epsilon] \cup E \times \epsilon)\] for any $0 < \epsilon < 1$ and everything would work just as well (with the appropriate modifications to the definitions of $D$, $S'$ and $Q'$). For later reference we shall call this ``pushing $E$ into $B$ down to depth $\epsilon$''. Also, observe that the collar $c^+$ can be fixed once and for all; it does not depend on the disk $E$ being considered. The same is not true of $c^{-}$, since it was constructed pasting local collars along $\partial E$.
\end{remark}

\subsection{Stage 2} \label{subsec:2} Let now $E_1,E_2, \ldots,E_n$ be disjoint closed disks in $\partial B$ whose boundaries do not meet $T$. Push each $E_j$ into $B$ while leaving its boundary untouched. This is done exactly as described in \ref{subsec:1} for a single disk: having chosen a collar $c^+ : \partial B \times [0,2] \longrightarrow B$, we consider the family of disjoint disks \[\hat{E}_j := c^+(\partial E_j \times [0,1] \cup E_j \times 1);\] all of them properly embedded in $B$. Arguing as in Proposition \ref{prop:separates} one sees that the (union of the) disks $\hat{E}_1, \hat{E}_2,\ldots,\hat{E}_n$ separate $B$ into $n+1$ connected components $V_0, V_1, \ldots, V_n$. We label them in such a way that $V_0$ does not meet any of the $E_j$ whereas $\dot{E}_j \subseteq V_j$ for each $1 \leq j \leq n$. More explicitly \[V_0 := (B - C/2) \cup c^+((\partial B - \cup_j E_j) \times [0,2]) \quad \text{and} \quad V_j := c^+(\dot{E}_j \times [0,1)) \quad \text{for } j = 1,2,\ldots,n;\] here $C/2$ is the ``half collar'' introduced in the proof of Proposition \ref{prop:separates}. The $V_j$ satisfy the following properties:
\begin{itemize}
	\item[(V1)] $\dot{E}_j \subseteq V_j$ for each $j = 1,\ldots,n$
	\item[(V2)] $\bar{V}_i \cap \bar{V}_j = \emptyset$ for $1 \leq i \neq j \leq n$
	\item[(V3)] $\bar{V}_0 \cap \bar{V}_j = \hat{E}_j$ for each $j = 1,\ldots,n$
\end{itemize}

\begin{proposition} \label{prop:tame_dec_more} The inequality $r(B) \geq \sum_{j=0}^n r(\bar{V}_j)$ holds.
\end{proposition}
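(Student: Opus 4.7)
The approach is to iterate the single-disk subadditivity already established in Proposition \ref{prop:tame_dec}. Introduce the decreasing chain of compact sets $A_0 := B$ and $A_k := \bar{V}_0 \cup \bar{V}_{k+1} \cup \cdots \cup \bar{V}_n$ for $1 \le k \le n$, so that $A_n = \bar{V}_0$. Properties (V1)--(V3) immediately give $A_{k-1} = A_k \cup \bar{V}_k$ and $A_k \cap \bar{V}_k = \hat{E}_k$. Since $\hat{E}_k$ is a disk, $\check{H}^1(\hat{E}_k) = 0$, so once we know each decomposition $A_{k-1} = A_k \cup \bar{V}_k$ is tame, the subadditivity property (P5) yields $r(A_{k-1}) \ge r(A_k) + r(\bar{V}_k)$. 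Chaining these inequalities telescopes to
\[r(B) \geq r(A_n) + \sum_{j=1}^n r(\bar{V}_j) = r(\bar{V}_0) + \sum_{j=1}^n r(\bar{V}_j),\]
which is exactly the statement of the proposition.

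The main obstacle is proving the tameness of each decomposition $A_{k-1} = A_k \cup \bar{V}_k$. For this I would essentially re-run the proof of Proposition \ref{prop:tame_dec}, but focused on the single disk $\hat{E}_k$. That proof constructs, via a partial bicollar of $\partial B$ along $\partial E_k$ together with the interior collar $c^+$, a box $Q_k'$ of the explicit form described in Remark \ref{rem:Qprima}, and verifies the criterion of Proposition \ref{prop:tame} for it. The crucial additional observation needed here is that the box $Q_k'$ can be made arbitrarily thin around $\hat{E}_k$, and in particular thin enough to be disjoint from every other $\hat{E}_j$ (with $j \ne k$) and from every $\bar{V}_j$ with $j \notin \{0,k\}$. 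This is possible because $\hat{E}_k$ is already disjoint from all those sets: the $E_j$ are pairwise disjoint on $\partial B$, so the pushed disks $\hat{E}_j$ are pairwise disjoint via the embedding $c^+$, and each $\bar{V}_j$ with $j \ne 0,k$ has boundary contained in $E_j \cup \hat{E}_j$, neither of which meets $\hat{E}_k$.

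Once $Q_k'$ has been shrunk in this way, the two sides of $\hat{E}_k$ inside $Q_k'$ meet only $\bar{V}_0$ and $\bar{V}_k$. Therefore conditions (1) and (2) of Proposition \ref{prop:tame} can be checked verbatim as in the proof of Proposition \ref{prop:tame_dec}, with $A_k$ playing the role previously played by $\bar{V}_0$ (since on the ``outer'' side of $\hat{E}_k$ inside $A_{k-1}$ the complement of $\bar{V}_k$ is precisely $A_k$, and locally in the thin box this is the same as $\bar{V}_0$) and $\bar{V}_k$ playing its former role. This yields the desired tameness, and the iterated subadditivity described above completes the proof.
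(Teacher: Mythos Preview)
Your proof is correct and follows essentially the same approach as the paper: you peel off the $\bar{V}_k$ one at a time via a descending chain $A_0 \supseteq A_1 \supseteq \cdots \supseteq A_n = \bar{V}_0$, whereas the paper builds up via the ascending chain $K_j := \bar{V}_0 \cup \bar{V}_1 \cup \cdots \cup \bar{V}_j$, but in both cases each step splits off a single $\bar{V}_k$ along the disk $\hat{E}_k$ and invokes the tame-decomposition argument of Proposition~\ref{prop:tame_dec}. Your explicit remark that the box $Q_k'$ must be taken thin enough to miss the other $\bar{V}_j$ is a point the paper glosses over with ``the same argument,'' so if anything your version is slightly more careful.
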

\begin{proof} Consider the ascending sequence of compact sets \[K_j := \bar{V}_0 \cup \bar{V}_1 \cup \ldots \cup \bar{V}_j\] for $0 \leq j \leq n$. Notice that $K_{j+1} := K_j \cup \bar{V}_{j+1}$ and $K_j \cap \bar{V}_{j+1} = \bar{V}_0 \cap \bar{V}_{j+1} = \hat{E}_{j+1}$ by (V2) and (V3). The same argument of Proposition \ref{prop:tame_dec} (enlarging the disk $\hat{E}_{j+1}$ using the fact that its boundary does not contain any wild points) shows that the decomposition $K_{j+1} = K_j \cup \bar{V}_{j+1}$ is tame, and so entails that $r(K_{j+1}) \geq r(K_j) + r(\bar{V}_{j+1})$. Inductively, this gives \[r(K_n) \geq r(K_0) + \sum_{j=1}^n r(\bar{V}_j).\]

It only remains to observe that $K_0 = \bar{V}_0$ (this is clear) and $K_n = \bar{V}_0 \cup \bar{V}_1 \cup \ldots \cup \bar{V}_n$ is the whole $B$: indeed, by definition the union of the $V_j$ is $B - \cup \hat{E}_j$; since (V3) guarantees that $\bar{V}_0$ contains all the $\hat{E}_j$, it follows that the union of the $\bar{V}_j$ is all of $B$.
\end{proof}

\subsection{Stage 3} We are finally ready to put all the pieces together and prove Theorem \ref{teo:top}:

\begin{proof}[Proof of Theorem \ref{teo:top}] From the fact that $T$ is totally disconnected it is easy to prove (or see \cite[Theorem 6, p. 72 and Theorem 5, p. 93]{moise2}) that it has arbitrarily small neighbourhoods in $\partial B$ that are unions of a finite number of disjoint disks. Thus for $k = 0,1,2,\ldots$ we may construct a sequence of neighbourhoods $E^{(k)}$ of $T$ in $\partial B$ such that
\begin{itemize}
	\item[(E1)] each $E^{(k)}$ is a union of disjoint disks $E_1^{(k)},E_2^{(k)}, \ldots, E_{n_k}^{(k)}$,
	\item[(E2)] $E^{(k+1)}$ is contained in the interior of $E^{(k)}$ for every $k$,
	\item[(E3)] the intersection $\bigcap_k E^{(k)}$ is precisely $T$.
\end{itemize}

For each $k$ apply the construction described in \ref{subsec:2} to the family of disks $E_1^{(k)}, E_2^{(k)}, \ldots, E_{n_k}^{(k)}$. This involved a choice of a collar $c^+$ of $\partial B$ in $B$; this is to be made once and used for every $k$ as mentioned in Remark \ref{rem:depth}. Also (again, refer to Remark \ref{rem:depth} we can push the disks $E_j^{(k)}$ into $B$ only to depth $\epsilon = 1/k$. The resulting disks $\hat{E}_1^{(k)}, \hat{E}_2^{(k)},\ldots,\hat{E}_{n_k}^{(k)}$ separate $B$ into $n_k+1$ connected components $V_0^{(k)},V_1^{(k)},\ldots,V_{n_k}^{(k)}$ of which the ones of interest to us are \[V_j^{(k)} = c^+(\dot{E}_j^{(k)} \times [0,1/k)) \quad j = 1,2,\ldots,n_k.\] Condition (E2) on the $E^{(k)}$ implies that each disk $E_j^{(k+1)}$ is contained in some disk $E_{j'}^{(k)}$. Since $c^+$ is independent of $k$, this entails that each $V_j^{(k+1)}$ is contained in some $V_{j'}^{(k)}$. Set \[B_0^{(k)} := \bar{V}_0^{(k)} \quad \text{and} \quad B_1^{(k)} := \bigcup_{j=1}^{n_k} \bar{V}_j^{(k)}.\] From what we just said, $B_1^{(k+1)} \subseteq B_1^{(k)}$ for each $k$. Also, (E3) and the condition that the disks $\hat{E}_j^{(k)}$ are obtained by pushing $E_j^{(k)}$ only to depth $1/k$ implies that $\bigcap_k B_1^{(k)} = T$. Finally, Proposition \ref{prop:tame_dec_more} shows that \[r(B) \geq \sum_{j=0}^{n_k} r(\bar{V}_j^{(k)}) \geq \sum_{j=1}^{n_k} r(\bar{V}_j^{(k)}) = r(B_1^{(k)})\] where in the last step we have used the trivial fact that $r$ of a disjoint union of finitely many compact sets is the sum of their $r$ numbers. Now from the semicontinuity of $r$ and the fact that the $B_1^{(k)}$ decrease and have $T$ as their intersection it follows that \[r(B) \geq \liminf_{k \rightarrow \infty} r(B_1^{(k)}) \geq r(T).\] This concludes the proof of Theorem \ref{teo:top}.
\end{proof}

\section{The proof of Theorem \ref{teo:main2} and Corollary \ref{cor:uncountable}} \label{sec:proof_main}

All the work done so far makes the proof of the following result very short:

\begin{theorem} \label{teo:B} Let $B \subseteq \mathbb{R}^3$ be a compact $3$--manifold with a connected boundary $\partial B$. Notice that $\partial B$ is a closed surface. Suppose that $\partial B$ contains a compact, totally disconnected set $T$ such that:
\begin{itemize}
	\item[(i)] $T$ is not rectifiable.
	\item[(ii)] $\partial B$ is locally flat at each $p \not \in T$.
\end{itemize}
Then $B$ cannot be realized as an attractor for a homeomorphism of $\mathbb{R}^3$.
\end{theorem}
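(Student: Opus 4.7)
The plan is straightforward because the three preceding sections have packaged all the nontrivial work into ready-made tools, and Theorem \ref{teo:B} is essentially a one-line contradiction built from them. Specifically, I would argue by contradiction: assume that $B$ is realized as an attractor for some homeomorphism of $\mathbb{R}^3$, and then force $r(B)$ to be simultaneously finite and infinite.

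First, since $B$ is a compact subset of $\mathbb{R}^3$ realized as an attractor, the finiteness property (P1) stated in Section \ref{sec:recall} applies verbatim (it is stated for arbitrary compact sets, not only surfaces) and yields $r(B) < \infty$. This is the only place where dynamics enters the argument; everything that follows is purely topological.

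Second, the hypotheses of Theorem \ref{teo:top} are exactly those imposed on $B$ and $T$ in the present statement: $B$ is a compact $3$-manifold with connected boundary $\partial B$, the set $T \subseteq \partial B$ is compact and totally disconnected, and $\partial B$ is locally flat at every point outside $T$. Theorem \ref{teo:top} therefore gives $r(B) \geq r(T)$. Finally, since $T$ is not rectifiable, the dichotomy established in Theorem \ref{teo:tot_disc} forces us into its second alternative, $r(T) = \infty$.

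Concatenating the three, we obtain $\infty = r(T) \leq r(B) < \infty$, which is absurd, and the theorem follows. I do not anticipate any genuine obstacle at this stage: the dynamical input is encapsulated entirely in (P1), the geometric pushing--and--splitting construction is encapsulated in Theorem \ref{teo:top}, and the rigidity of nonrectifiable totally disconnected sets is encapsulated in Theorem \ref{teo:tot_disc}. The only care needed is to verify that each of these black boxes applies with the hypotheses as stated, which is immediate from a term-by-term comparison of the hypotheses.
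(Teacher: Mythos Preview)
Your proposal is correct and matches the paper's own proof essentially line for line: the paper also combines Theorem \ref{teo:top} and Theorem \ref{teo:tot_disc} to get $r(B) \geq r(T) = \infty$, and then invokes the finiteness property (P1) to conclude that $B$ cannot be an attractor. The only cosmetic difference is that the paper phrases it as a direct application of the contrapositive of (P1) rather than as an explicit contradiction, but this is immaterial.
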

\begin{proof} By Theorems \ref{teo:top} and \ref{teo:tot_disc} we have $r(B) \geq r(T) = \infty$, so $r(B) = \infty$. Then the finiteness property (P1) of $r$ (see p. \pageref{P1}) shows that $B$ cannot be an attractor for a homeomorphism.
\end{proof}

From this,

\begin{proof}[Proof of Theorem \ref{teo:main2}] Suppose that the closed and connected surface $S$ were an attractor for a homeomorphism $f$ of $\mathbb{R}^3$. Think of $\mathbb{S}^3$ as $\mathbb{R}^3$ together with the point at infinity $\infty$ and extend $f$ to all of $\mathbb{S}^3$ letting $f(\infty) := \infty$. Viewing $S$ as a subset of $\mathbb{S}^3$, it is still an attractor for $f$ with the same basin of attraction.

Let $U_1$ and $U_2$ be the connected components of $\mathbb{S}^3 - S$. Since $S$ bounds a $3$--manifold by hypothesis, the closure (in $\mathbb{R}^3$ and also in $\mathbb{S}^3$) of at least one of the $\bar{U}_i$ is a $3$--manifold. Call it $B$. Since $S$ is invariant under $f$, so is $\mathbb{S}^3 - S$ and therefore $f$ either leaves each $U_i$ invariant or interchanges them. However, only one of the $U_i$ contains $\infty$, which is fixed by $f$, so it must be the case that $f(U_i) = U_i$. As a consequence $f(B) = B$, so that $B$ is also invariant under $f$. It is straightforward to check that $B$ is an attractor for $f$. Also, by definition $B$ is a compact $3$--manifold whose boundary is precisely $\partial B = S$. Applying Theorem \ref{teo:B} to $B$ we see that $T$ has to be rectifiable, and so Theorem \ref{teo:main2} follows.
\end{proof}

The alternative form of Theorem \ref{teo:main2prima} is an immediate consequence of this:

\begin{proof}[Proof of Theorem \ref{teo:main2prima}] Being a compact manifold, $S$ has finitely many connected components $S_i$ each of which is a closed surface. Also, all the $S_i$ bound a $3$--manifold on one side because $S$ does. Since each $S_i$ is open in $S$, the surface $S$ is locally flat at some $p \in S_i$ if and only if $S_i$ is locally flat at $p$. Therefore the set of wild points $W$ of $S$ is the union of the sets $W_i$ of wild points of the $S_i$. Finally, it is proved in \cite{mio4} that each component of an attractor in $\mathbb{R}^n$ is an attractor itself. Thus all the $S_i$ are attractors, and applying the contrapositive of Theorem \ref{teo:main2} to them (with $T = W_i$ in each case) it follows that each $W_i$ must be rectifiable. Since the $W_i$ are open in $W$ and cover it, $W$ is locally rectifiable and, by Lemma \ref{lem:dico}, rectifiable.
\end{proof}

We saw in Remark \ref{rem:r0tame} that a rectifiable totally disconnected set has a simply connected complement in $\mathbb{R}^3$. Thus we may state the following, more computational consequence of Theorem \ref{teo:main2}:

\begin{corollary} Let $S \subseteq \mathbb{R}^3$ be a closed surface that bounds a $3$--manifold. Suppose that $S$ contains a closed, totally disconnected set $T$ such that (i) $S$ is locally flat at each $p \not\in T$ and (ii) the complement of $T$ in $\mathbb{R}^3$ is not simply connected. Then $S$ cannot be an attractor.
\end{corollary}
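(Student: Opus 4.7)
The plan is to deduce this corollary as a direct consequence of Theorem \ref{teo:main2}, using Remark \ref{rem:r0tame} as the bridge between the two hypotheses.

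First I would note that $T$ is in fact compact: since $S$ is a closed surface it is compact, and a closed subset of a compact space is compact. Thus the ``closed, totally disconnected'' set $T$ in the hypothesis of the corollary meets the ``compact, totally disconnected'' requirement of Theorem \ref{teo:main2}.

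Next I would translate hypothesis (ii) into non-rectifiability of $T$. Remark \ref{rem:r0tame} asserts that whenever $T$ is a compact, totally disconnected, rectifiable subset of $\mathbb{R}^3$, its complement $\mathbb{R}^3 - T$ is simply connected. Contrapositively, since by hypothesis $\mathbb{R}^3 - T$ fails to be simply connected, $T$ cannot be rectifiable.

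With this in hand, I have verified both hypotheses of Theorem \ref{teo:main2}: $T$ is a compact, totally disconnected subset of $S$ off of which $S$ is locally flat (hypothesis (i) of the corollary), and $T$ is not rectifiable (just established). Therefore Theorem \ref{teo:main2} yields that $S$ cannot be realized as an attractor for a homeomorphism of $\mathbb{R}^3$, which is exactly the desired conclusion. There is no real obstacle here; the entire content is packaged inside Theorem \ref{teo:main2} and Remark \ref{rem:r0tame}, and the corollary merely repackages the hypothesis ``$T$ not rectifiable'' into the often easier-to-verify form ``$\pi_1(\mathbb{R}^3 - T) \neq 1$''.
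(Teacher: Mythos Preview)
Your proposal is correct and follows exactly the route the paper indicates: the paper introduces this corollary as an immediate consequence of Theorem \ref{teo:main2} via Remark \ref{rem:r0tame}, which is precisely what you do (with the helpful extra observation that closedness in $S$ gives compactness of $T$).
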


We finish by proving that there are uncountably many non equivalent ways of embedding a $2$--sphere in $\mathbb{R}^3$ in such a way that it cannot be an attractor (Corollary \ref{cor:uncountable}). It will be evident from the proof that the corollary is true for (orientable) surfaces of any genus. However, for surfaces of higher genus (think, for instance, of a torus) one can obtain trivially nonequivalent embeddings by knotting the surface differently in $\mathbb{R}^3$. Concentrating on spheres we avoid this degree of freedom, which is not interesting in the present context.

Let us review the construction of nonattracting surfaces described in Section \ref{sec:statement}. We start with a compact $3$--manifold $B_0$, which in our present case is going to be the unit closed $3$--ball, and extract feelers that branch in such a way as to (in the limit) reach a prescribed non rectifiable, compact, totally disconnected set $T$. The branching process is guided by a sequence of neighbourhoods $N_k$ of $T$ chosen in advance. The resulting compact $3$--manifold $B$ is still homeomorphic to the original $B_0$, and its boundary $S$ (which is, accordingly, homeomorphic to $S_0$) is locally flat at each $p \not \in T$. Then Theorem \ref{teo:main2} immediately implies that $S$ cannot be an attractor.

Although for our purposes in Section \ref{sec:statement} we did not need to fully identify the set $W$ of wild points of $S$ (it was enough to know that $W \subseteq T$), now we need to be a little more precise and guarantee that $W = T$. In general this equality does not need to hold: for instance, if $T$ has an isolated point $p$ and its corresponding feeler converges ``straight'' towards $p$ (think of a cone having $p$ at its tip) then $p$ is not a wild point.

There are several ways to refine the above construction in order to guarantee that $W = T$. One of them is to require that $T$ be \emph{homogeneous}; that is, for any two $p, q \in T$ there exists a homeomorphism of $\mathbb{R}^3$ that sends $p$ onto $q$. It is not difficult to prove that: (i) if $S$ is locally flat at some $p \in T$, then $T$ is locally rectifiable at $p$ (essentially because every compact, totally disconnected set in the plane is rectifiable; see Chapter 13 in \cite{moise2}); (ii) due to the homogeneity of $T$, if it is locally rectifiable at a single point then it is locally rectifiable at every point. Since $T$ is non rectifiable, (i), (ii) and Lemma \ref{lem:dico} entail that $W = T$.

A less simple but maybe more illustrative way of achieving the equality $W = T$ consists in patterning the feelers that are used to enlarge $B_0$ after the wild arc $\alpha$ shown in Figure \ref{fig:fox_artin}.(a), due to Fox and Artin.  By ``patterning the feeler along $\alpha$'' we mean that the feeler is obtained by thickening $\alpha$ to a solid tube whose diameter tapers towards zero as it approaches $p$, as shown in Figure \ref{fig:fox_artin}.(b). An arc is tame if it can be sent into a straight line segment by an ambient homeomorphism, or wild otherwise. That $\alpha$ is wild was established by Fox and Artin \cite[Example 1.2, pp. 983 ff.]{foxartin1} performing a local analysis of the fundamental group of $\mathbb{R}^3 - \alpha$ near $p$. More specifically, they showed that $\alpha$ is not $1$--LCC at $p$, which it would be if it were tame (see \cite[Section 2.8, pp. 75 ff.]{davermanvenema1} for more details). We remark, because it will play a role later on, that an arc $\beta \subseteq \mathbb{R}^3$ that is actually contained in a plane within $\mathbb{R}^3$ is $1$--LCC at each of its points because there are no wild arcs in two dimensions (\cite[Chapter 10, pp. 59 ff.]{moise2}).

\begin{figure}[h]
\null\hfill
\subfigure[A Fox--Artin arc]{
\begin{pspicture}(0,0)(3.8,1.8)
\rput[bl](0,0){\scalebox{0.75}{\includegraphics{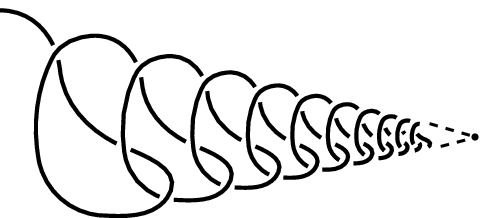}}}
\rput(3.6,0.4){$p$} \rput[bl](0.8,1.6){$\alpha$}
\end{pspicture}}
\hfill
\subfigure[A Fox--Artin feeler]{
\begin{pspicture}(0,0)(3.8,1.8)
\rput[bl](0,0){\scalebox{0.75}{\includegraphics{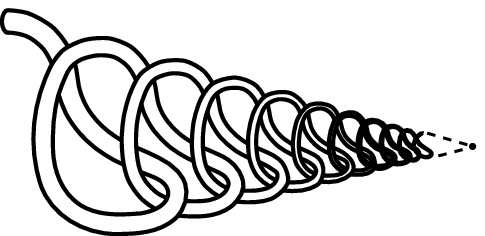}}}
\rput(3.6,0.4){$p$}
\end{pspicture}}
\hfill\null
\caption{\label{fig:fox_artin}}
\end{figure}

In the actual construction of $S$ we need to branch feelers into smaller subfeelers, of course, but we want to preserve their ``Fox--Artin nature''. Figure \ref{fig:feeler} suggests how this is done. As illustrated in Figure \ref{fig:feeler}.(a) suppose that, just after crossing the boundary of some $N_k$ (recall that these are the neighbourhoods of $T$ that are used to guide the branching process), we need to split the incoming feeler into, say (for simplicity), two subfeelers that will eventually converge to two different points $p$ and $p'$ of $T$. We continue as in Figure \ref{fig:feeler}.(b), simply patterning each subfeeler after the Fox--Artin arc $\alpha$. However, this is not all: we insist that the subfeelers are tangled with the original feeler in the characteristic fashion of $\alpha$, as shown for the lower subfeeler in Figure \ref{fig:feeler}.(c). The upper subfeeler (the one leading to $p'$) should also be tangled with its parent feeler in the same way, although this is not shown in the drawing to avoid cluttering it.

\begin{figure}[h]
\null\hfill
\subfigure[]{
\begin{pspicture}(-0.1,0)(3,2)
\rput[bl](0,0){\scalebox{0.75}{\includegraphics{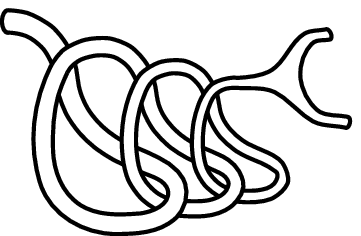}}}
\end{pspicture}}
\hfill
\subfigure[]{
\begin{pspicture}(-0.1,0)(4.6,2)
\rput[bl](0,0){\scalebox{0.75}{\includegraphics{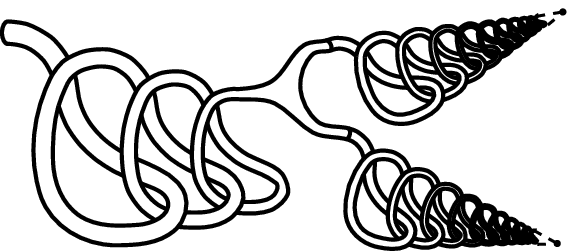}}}
\rput[l](4.4,0){$p$}
\rput[l](4.4,1.9){$p'$}
\end{pspicture}}
\hfill
\subfigure[]{
\begin{pspicture}(-0.1,0)(4.6,2)
\rput[bl](0,0){\scalebox{0.75}{\includegraphics{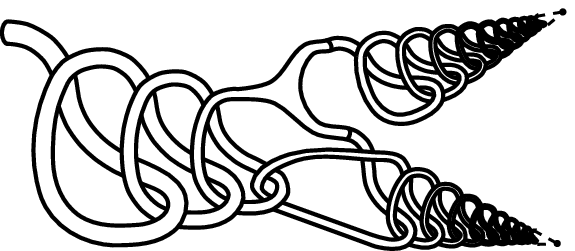}}}
\rput[l](4.4,0){$p$}
\rput[l](4.4,1.9){$p'$}
\end{pspicture}}
\hfill\null
\caption{Branching a Fox--Artin feeler \label{fig:feeler}}
\end{figure}

This last step guarantees that, for each $p \in T$, there is a Fox--Artin arc $\alpha_p$ which looks exactly like $\alpha$ (in particular, it is also a wild arc) that starts at any prescribed point in $B_0$ and ends at $p$, running along the boundary of $B$ (that is, along the surface $S$) and choosing at each branching point the appropriate subfeeler to reach $p$. In turn, this entails that $S$ cannot be locally flat at $p$. For, suppose it were. Then locally near $p$ the surface $S$ would look like a plane in $\mathbb{R}^3$, and $\alpha_p$ (again, near $p$) would be contained in that plane. But we mentioned earlier that there are no wild curves in two dimensions and, in particular, $\alpha_p$ would be $1$--LCC at $p$, which it is not. Thus, summing up, we have proved the following:

\begin{remark} \label{rem:wildset} If the construction of Section \ref{sec:statement} is performed using Fox--Artin feelers as just described, the set of wild points of the resulting surface is all of $T$.
\end{remark}

(Incidentally, maybe after seeing this argument Remark \ref{rem:aboutmain}.(1) becomes more convincing). With Remark \ref{rem:wildset} and a result of Sher the proof of Corollary \ref{cor:uncountable} is now very easy:

\begin{proof}[Proof of Corollary \ref{cor:uncountable}] Sher proved \cite[Corollary 1, p. 1199]{sher1} that there exists an uncountable family $\{T_i\}_{i \in I}$ of Cantor sets in $\mathbb{R}^3$ that are all inequivalently embedded; that is, for any two different $i,j \in I$ there is no ambient homeomorphism sending $T_i$ onto $T_j$. Notice that, since any two rectifiable Cantor sets in $\mathbb{R}^3$ are equivalently embedded (because both can be sent into a straight line, and they are certainly equivalently embedded there by a homeomorphism that can evidently be extended to all of $\mathbb{R}^3$), all the $T_i$ but at most one are non rectifiable. Discarding the latter we may assume without loss of generality that all the $T_i$ are non rectifiable.

For each $T_i$ let $S_i$ be the attracting $2$--sphere obtained by applying the construction of Section \ref{sec:statement}, but using Fox--Artin feelers as just described. This guarantees that the set $W_i$ of wild points of $S_i$ is precisely $T_i$. Also, since the $T_i$ are not rectifiable, an application of Theorem \ref{teo:main2} shows that no $S_i$ can be an attractor.

Now suppose that there exists an ambient homeomorphism $h$ that sends $S_i$ onto $S_j$, with $i \neq j$. It follows immediately from the definition of local flatness that if $S_i$ is locally flat at $p$ then $S_j$ is locally flat at $h(p)$ and conversely or, equivalently stated in terms of wild points, $h(W_i) = W_j$. However, the surfaces were constructed in such a way that $W_i = T_i$ and $W_j = T_j$, so we would have a homeomorphism of $\mathbb{R}^3$ that sends $T_i$ onto $T_j$, thus contradicting the fact that the $T_i$ are all inequivalently embedded.
\end{proof}

\begin{remark} Garity, Repov\v{s} and \v{Z}eljko \cite[Theorem 2, p. 294]{garityrepovszeljko1} strenghtened the result of Sher by showing that there exists an uncountable family of Cantor sets $\{T_i : i \in I\}$ that, in addition to being inequivalently embedded, are all homogeneous. Using this and the argument sketched earlier that the homogeneity of $T_i$ is enough to guarantee that $W_i = T_i$ it is possible to give an alternative, shorter proof of Corollary \ref{cor:uncountable}. We prefer the approach taken above because it avoids the homogeneity condition which, as we have seen, is not essential for the validity of Corollary \ref{cor:uncountable}.
\end{remark}

\bibliographystyle{plain}
\bibliography{biblio}

\end{document}